\documentclass[11pt]{article}
\usepackage[a4paper,margin=27mm]{geometry}
\usepackage[T1]{fontenc}
\usepackage{lmodern}
\usepackage{microtype}
\usepackage{amsmath,amssymb,amsthm,mathtools}
\usepackage{aliascnt}
\usepackage{enumitem}
\usepackage{xcolor}
\usepackage[colorlinks=true,linkcolor=blue!55!black,
  citecolor=blue!55!black,urlcolor=blue!60!black]{hyperref}
\usepackage[nameinlink,noabbrev,capitalise]{cleveref}

\newtheorem{theorem}{Theorem}[section]
\newaliascnt{lemma}{theorem}
\newtheorem{lemma}[lemma]{Lemma}
\aliascntresetthe{lemma}
\newaliascnt{proposition}{theorem}

\aliascntresetthe{proposition}
\newaliascnt{corollary}{theorem}
\newtheorem{corollary}[corollary]{Corollary}
\aliascntresetthe{corollary}
\newaliascnt{claim}{theorem}

\aliascntresetthe{claim}
\theoremstyle{definition}
\newaliascnt{definition}{theorem}
\newtheorem{definition}[definition]{Definition}
\aliascntresetthe{definition}
\theoremstyle{remark}
\newaliascnt{remark}{theorem}
\newtheorem{remark}[remark]{Remark}
\aliascntresetthe{remark}

\newcommand{\Gnp}{G(n,p)}
\newcommand{\del}{\operatorname{del}}
\newcommand{\calI}{\mathcal I}
\newcommand{\calL}{\mathcal L}
\newcommand{\eps}{\varepsilon}
\newcommand{\e}{\mathrm e}

\title{Sparse Approximate Chromatic Profiles of Triangle-Free Graphs}
\author{Guorong Gao\thanks{School of Mathematics and Statistics, Fuzhou
University, Fuzhou 350108, China. Email: \mbox{grgao@fzu.edu.cn}.}
\and Jialin He\thanks{School of Mathematical Sciences, Key Laboratory of
MEA (Ministry of Education), Shanghai Key Laboratory of PMMP, and Nantong
Institute for Applied Mathematics and Artificial Intelligence, East China
Normal University, Shanghai 200241, China. Email: jlhe@math.ecnu.edu.cn.}}
\date{}

\begin{document}
\maketitle

\begin{abstract}
We prove a sparse version of the four-colour theorem of Brandt and
Thomass\'{e}, answering a question of Allen, B\"ottcher, Kohayakawa and
Roberts.  For every fixed $0<\gamma\le1/10$ and every
$p=p(n)\in(0,1]$, asymptotically almost surely every spanning
triangle-free $H\subseteq G(n,p)$ with
$\delta(H)\ge(1/3+\gamma)pn$ can be made four-partite by deleting at
most $\min\{C_\gamma n/p,(1/8+\gamma)pn^2\}$ edges.
In fact, deleting at most $C_\gamma n/p$ edges yields a graph that
admits a homomorphism to an Andr\'{a}sfai or Vega graph with certificate
complexity at most $1/(3\gamma)$.
Together with matching lower bounds from random blow-ups, this structural
result determines, uniformly in $p$, the minimum-degree thresholds for
$q$-partiteness with $O(n/p)$ edge deletions: $2/5$ for $q=2$, $10/29$ for $q=3$,
and $1/3$ for every fixed $q\ge4$.
For every fixed $q\ge2$ and $\log n/n\ll p\ll n^{-1/2}$,
asymptotically almost surely $G(n,p)$ contains a spanning triangle-free
subgraph with minimum degree $(1-o(1))pn$ that requires
$(1/(2q)+o(1))pn^2$ edge deletions to become $q$-partite, showing
that the coefficient $1/(2q)$ cannot be improved even under this
stronger degree condition.
\end{abstract}

\medskip
\noindent\textbf{Keywords.}
Triangle-free graph; minimum degree; random graph; graph homomorphism;
chromatic profile; sparse regularity.

\smallskip
\noindent\textbf{2020 Mathematics Subject Classification.}
05C35, 05C15, 05C80.

\section{Introduction}\label{sec:introduction}

Minimum-degree conditions impose strong restrictions on the chromatic
number and structure of triangle-free graphs.  One seeks the smallest
value of $\delta(H)/n$ that forces an $n$-vertex triangle-free graph $H$
to admit a colouring with a prescribed number of colours, or with a
number bounded independently of $n$.  A stronger structural conclusion
is a homomorphism to a triangle-free graph of bounded order, which
represents $H$ as a subgraph of a blow-up of a bounded template.

For two colours, the first question has a precise answer.  The theorem of
Andr\'{a}sfai--Erd\H{o}s--S\'{o}s \cite{AES} states that every
$n$-vertex triangle-free graph $H$ with $\delta(H)>2n/5$ is bipartite.
This degree condition is sharp, since blow-ups of the five-cycle with
equal class sizes are triangle-free and non-bipartite, with minimum degree
exactly $2n/5$.
Thus $2/5$ is the minimum-degree threshold for forcing bipartiteness.
For three colours, Jin \cite{Jin} proved that $\delta(H)>10n/29$ guarantees
$\chi(H)\le3$, while H\"aggkvist's construction \cite{Haggkvist} shows
that the constant $10/29$ cannot be lowered.  In this sense, $10/29$ is
the corresponding threshold for three-colourability.

If one asks only for a number of colours bounded independently of $n$,
a weaker minimum-degree condition suffices.  Thomassen \cite{Thomassen}
proved that, for every fixed $\gamma>0$, an $n$-vertex triangle-free graph
$H$ with $\delta(H)\ge(1/3+\gamma)n$ has chromatic number bounded in terms
of $\gamma$ alone.  The structural theorem of \L{}uczak
\cite{LuczakStructure} strengthens this conclusion by showing that $H$
admits a homomorphism to a triangle-free graph of order bounded in terms
of $\gamma$.  Brandt and Thomass\'{e}
\cite[Corollary~4.2]{BrandtThomasse} subsequently proved that the condition
$\delta(H)>n/3$ already guarantees $\chi(H)\le4$.  In
\cite[Corollary~4.1]{BrandtThomasse}, they classify the twin-free weighted
maximal triangle-free graphs of weighted minimum degree greater than
$1/3$.  Identifying vertices with identical neighbourhoods and weighting
each resulting vertex by the proportion of vertices in its class shows
that every maximal triangle-free graph satisfying $\delta(H)>n/3$ is a
blow-up of an Andr\'{a}sfai or Vega graph; see also
\cite{BrandtPisanski,ChenJinKoh,LPRStrong} for the structural development.

We study sparse analogues of these questions for spanning triangle-free
subgraphs $H$ of the binomial random graph $G(n,p)$.  Here the natural
degree scale is $pn$ rather than $n$, and we allow edge deletions before
requiring a colouring or a homomorphism.  The aim is to determine both
the minimum-degree thresholds and the number of deletions needed.
Allen--B\"ottcher--Kohayakawa--Roberts \cite[Theorem~4]{ABKR} proved a
sparse analogue of Thomassen's bounded-colour theorem.  For every
$\gamma>0$, there are constants $C=C(\gamma)>0$ and $r=r(\gamma)$ such
that, for every $p=p(n)\in(0,1]$, asymptotically almost surely (a.a.s.) every
spanning triangle-free $H\subseteq G(n,p)$ with
\[
 \delta(H)\ge(1/3+\gamma)pn
\]
can be made $r$-partite by deleting at most
\[
 \min\left\{C\frac np,
       \left(\frac1{2r}+\gamma\right)pn^2\right\}
\]
edges.  Here we apply their theorem with $\gamma/2$ to allow a
non-strict minimum-degree hypothesis.  Thus their theorem bounds the number of parts in terms of
$\gamma$, but does not give the four-partite conclusion suggested by
Brandt and Thomass\'{e}'s dense theorem.  They explicitly raised the
following question in the introduction of \cite{ABKR}.
\begin{quote}
``It would also be interesting to know whether Theorem~4 could be improved
to generalise the result of Brandt and Thomass\'{e}.  We conjecture that this
is the case.''
\end{quote}
We answer this question by obtaining four parts, with deletion bound
$\min\{C_\gamma n/p,(1/8+\gamma)pn^2\}$, and prove the stronger conclusion
that deleting at most $C_\gamma n/p$ edges yields a homomorphism to a
bounded Andr\'{a}sfai or Vega template.  We also determine the corresponding
minimum-degree thresholds for every prescribed number of colours.

Throughout the paper, $G(n,p)$ denotes the binomial random graph on vertex
set $[n]$, in which every pair of vertices is an edge independently with
probability $p=p(n)\in(0,1]$.  An event holds
\emph{a.a.s.}\ if its
probability tends to one as $n\to\infty$.  All graphs are finite and simple.
For a graph $G$, we write $N_G(x,A)=N_G(x)\cap A$,
$d_G(x,A)=|N_G(x,A)|$, and
$N_G(x,y,A)=N_G(x,A)\cap N_G(y,A)$.  We use $e_G(X,Y)$ for the number of
edges with one endpoint in each of two disjoint sets $X$ and $Y$, and write
$\delta(G)$, $\Delta(G)$, $\alpha(G)$, and $\chi(G)$ for the minimum degree,
maximum degree, independence number, and chromatic number, respectively.
A homomorphism $G\to T$ is an edge-preserving vertex map; equivalently,
$G$ is a subgraph of a blow-up of $T$.  All logarithms are natural.
Asymptotic notation refers to $n\to\infty$; unless indicated otherwise,
its implicit constants may depend on the fixed parameters but not on
$n$ or $p$.  For positive functions $f$ and $g$, we write $f\ll g$
(or $g\gg f$) when $f=o(g)$.

To state the colouring results precisely, we introduce two parameters.
For an integer $q\ge2$, we write
\[
 \del_q(G)=\min_{\phi:V(G)\to[q]}
 \bigl|\{xy\in E(G):\phi(x)=\phi(y)\}\bigr|.
\]
Thus $\del_q(G)$ is the minimum number of edges whose deletion makes $G$
$q$-partite.

For a fixed $q\ge2$, the \emph{sparse edit-chromatic profile value}
$\alpha_q^{\rm edit}$ is the infimum of the real numbers $\alpha$ with
the following property.  For every $\eps>0$ there is $C=C(q,\eps)$ such
that, for every sequence $p=p(n)$, a.a.s.\ every
spanning triangle-free $H\subseteq G(n,p)$ satisfying
\[
 \delta(H)\ge(\alpha+\eps)pn
\]
obeys $\del_q(H)\le Cn/p$.  The constant is uniform in $p$, while the
probability statement is understood for each sequence $p=p(n)$.

The chromatic threshold studied in \cite{SparseChromaticThresholds}
concerns a bound on the chromatic number independent of $n$, without edge
deletions.  By contrast, our edit-chromatic profile fixes the number of
colours and quantifies the required deletions.  This is a sparse
analogue of the dense approximate chromatic profiles studied
by Illingworth \cite[Section~1.1]{Illingworth}.

Our first main result is a sparse analogue of the Brandt--Thomass\'{e}
theorem, with an explicit bound on the complexity of the target graph.

\begin{theorem}
\label{thm:template}
For every $0<\gamma\le1/10$ there is $C=C(\gamma)>0$ such that, for every
$p=p(n)\in(0,1]$, a.a.s.\ the following holds.
Every spanning triangle-free $H\subseteq\Gnp$ with
\[
 \delta(H)\ge(1/3+\gamma)pn
\]
can be transformed, by deleting at most $Cn/p$ edges, into a graph
$H^\circ$ admitting a homomorphism to an
Andr\'{a}sfai or Vega graph\footnote{The graphs and their certificate
complexity $L(T)$ are defined in \cref{sec:compression}.} $T$ satisfying
\[
 L(T)\le \frac1{3\gamma}.
\]
In particular, if $T=\Gamma_k$ then $3k-1\le(3\gamma)^{-1}$, while if
$T=\Upsilon_i^{\mu\nu}$ then
$27i-19-3\mu-3\nu\le(3\gamma)^{-1}$.
\end{theorem}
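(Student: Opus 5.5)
We will derive \cref{thm:template} from the dense Brandt--Thomass\'{e} classification through a sparse regularity transfer. The argument has three stages: a reduced weighted graph, a dense structural theorem applied to it, and a pull-back to $H$ with few deletions.

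\emph{Step 1 (regularity and reduced graph).} We apply the sparse regularity lemma to $H$ relative to $\Gnp$, with a parameter $\eps\ll\gamma$ and a partition $V_1,\dots,V_m$. A.a.s.\ $\Gnp$ is upper-uniform, so this is legitimate uniformly in $p$. When $p\ge C/n$ we use the sparse counting lemma (the K{\L}R conjecture, as proved by Balogh--Morris--Samotij / Saxton--Thomason / Conlon--Gowers--Samotij--Schacht) for triangles. It shows that the reduced graph $R$, whose edges are the $(\eps,p)$-regular pairs of density at least $dp$, is triangle-free. Otherwise the dense regular triangles would force a triangle in $H$.

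We then weight each cluster uniformly. Since $\delta(H)\ge(1/3+\gamma)pn$ and only $O(\eps+d)pn^2$ edges are lost to irregular pairs, sparse pairs and inside clusters, after discarding a few clusters the weighted minimum degree of $R$ exceeds $1/3+\gamma/2$. We extend $R$ to a maximal triangle-free weighted graph and identify twins. By \cite[Corollary~4.1]{BrandtThomasse}, $R$ is then a subgraph of a blow-up of an Andr\'{a}sfai graph $\Gamma_k$ or a Vega graph $\Upsilon_i^{\mu\nu}$.

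\emph{Step 2 (complexity bound).} We need $L(T)\le 1/(3\gamma)$. We expect this to follow from the results of \cref{sec:compression}: for a weighting of $T$ with minimum weighted degree at least $1/3+\gamma'$, one should have $\gamma'\le 1/(3L(T))$ up to lower-order terms. This holds for $\Gamma_k$, where the maximum achievable minimum degree is $k/(3k-1)=1/3+1/(3(3k-1))$. We must arrange the losses so that strict inequality survives, for instance by taking $\gamma/2$ losses that are $o(1)$ relative to the gap forced by integrality of $L$. This gives the explicit inequalities stated in the theorem.

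\emph{Step 3 (cleaning, the main obstacle).} Regularity alone gives only $o(pn^2)$ deletions, not $Cn/p$. We therefore use the template structure to build a robust assignment. Each vertex $x$ is mapped to a vertex $t\in T$ whose neighbourhood-weight profile matches $x$'s degrees into the parts $W_s$ (the unions of clusters mapped to $s$). We then re-classify iteratively: $x$ is assigned to $t$ if $x$ has at most $\beta pn$ neighbours in $\bigcup_{s\notin N_T(t)}W_s$.

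The key point is that a.a.s.\ in $\Gnp$ the following holds: for sets $X,Y$ with $|X|\le \beta' n$, the edge counts satisfy $e(X,Y)\le 2p|X||Y|+O(n/p)$-type bounds, via Chernoff and a union bound, in the style of the ABKR argument. Consider the ``bad'' edges, those mapped to non-edges of $T$. Their endpoints concentrate on a vertex set $B$ in which every vertex has $\Omega(\gamma pn)$ bad neighbours. Comparing $e(B)\ge\Omega(\gamma p n|B|)$ with the random-graph bound $e(B)\le p|B|^2+O(|B|\log n)$ forces $|B|\le O(1/p)$ times constants. Hence at most $|B|\cdot\Delta\le Cn/p$ edges need deleting; the degree bound $\Delta(\Gnp)=O(pn+\log n)$ handles the remaining case.

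The delicate point here is a vertex whose neighbourhood fits no single template vertex, for instance when it meets two antipodal classes. For such a vertex we use triangle-freeness together with the common-neighbourhood structure $N_G(x,y,A)$ of the template to derive a contradiction for all but $O(1/p)$ vertices. Once $H^\circ\to T$ is established, the stated bounds follow.
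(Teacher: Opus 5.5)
There is a genuine gap in how you choose the template. You classify the reduced graph $R$ on its own, by maximal triangle-free extension plus \cite[Corollary~4.1]{BrandtThomasse}. Only afterwards, in Step~3, do you try to fit each vertex into a neighbourhood of the resulting $T$. But the profile of a vertex (the clusters in which it has $\Omega(pm)$ neighbours) is an independent set of $R$ of size $>t/3$. It need not lie in the preimage of any $N_T(s)$, and triangle-freeness gives no contradiction, because the profile spans no edge of $R$.

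Here is a counterexample, already for $p=1$. Take the blow-up of $\Upsilon_2^{00}$ whose $y$-class has $n^{0.9}$ vertices. Give the other classes sizes proportional to $2,6,6,4,5,5,3$ on $x,a,b,c,u,v,w$ and $1,4,4,1,3$ on $v_0,\dots,v_4$ (total $44$). Every vertex, including those of the $y$-class, then has degree at least $(15/44-o(1))n$, where $15/44=1/3+1/132$. For a regular partition whose clusters lie inside the classes, $R$ is a blow-up of $\Upsilon_2^{10}$, which is already maximal triangle-free, so your Step~1 returns $T=\Upsilon_2^{10}$. With the bulk mapped to its natural template vertices, as your Step~3 does, the $n^{0.9}$ vertices of the $y$-class see all of the $u$-, $v$- and $w$-classes. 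These have no common neighbour in $\Upsilon_2^{10}$, so each such vertex needs $\Omega(n)$ deletions, $\Omega(n^{1.9})$ in total. Yet $H$ maps to $\Upsilon_2^{00}$ with no deletions. So your claim that such vertices number $O(1/p)$ is false.

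The missing idea is to build the profiles into the graph being classified. The paper's $F_0$ (\cref{lem:compression}) adds a vertex $z_I$ for every occurring independent profile, adjacent to $I$ and to each $z_J$ with $I\cap J=\varnothing$. This $F_0$ is triangle-free with property $D_4$, so the \L{}uczak--Polcyn--Reiher theorem gives a template through which all profiles factor. The weights must then be core weights on that template; a vertex like $y$ gets weight $0$ but still has $w(N_T(y))>1/3+\rho$. The inequality you only expect in Step~2 is proved for Vega graphs by averaging against the regular blow-up $Q$ of $T$ (\cref{lem:certificate}). The discreteness of the possible values of $L(T)$ then turns $L(T)<1/(3\rho)$ into $L(T)\le 1/(3\gamma)$.

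Independently of this, Step~3 does not reach the $n/p$ scale. A vertex assigned under your rule may keep up to $\beta pn$ template-violating edges, so these can total $\beta pn^2$, which is exactly the regularity scale you are trying to beat. Your count for $B$ also runs the wrong way. The bad neighbours of a vertex of $B$ need not lie in $B$. Even granting $e(B)\ge c\gamma pn|B|$, comparison with $e(B)\le p|B|^2+O(|B|\log n)$ forces $|B|\ge c\gamma n-O(p^{-1}\log n)$, not $|B|=O(1/p)$.

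The paper needs two separate mechanisms here.
\begin{enumerate}
\item Vertices whose profile contains an edge of $R$ are two-sided inheritance exceptions. By \cref{lem:host}(ii) there are $O(p^{-2})$ of them, so deleting their edges costs $O(n/p)$.
\item For edges between vertices with intersecting independent profiles $I,J$, one works in $A=S_{I\cap J}$, which has linear size since $|I\cap J|>3\rho t$.
\begin{itemize}
\item The minimum degree forces the missing host neighbours $D_x=N_\Gamma(x,A)\setminus N_H(x,A)$ to satisfy $|D_x|\le q_Xp|A|$, with $q_I+q_J<1-3\rho$.
\item Triangle-freeness gives $N_\Gamma(u,v,A)\subseteq D_u\cup D_v$.
\item The codegree bound lets one orient every non-exceptional edge so that all in-stars are $(q_X,\eta)$-bad.
\item The bound on vertex-disjoint bad stars then leaves $O(n/p)$ edges per profile pair.
\end{itemize}
\end{enumerate}

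Finally, K{\L}R-type triangle-freeness of $R$ is available only for $p\ge cn^{-1/2}$, not for $p\ge C/n$. Below that range one instead deletes all edges, since $e(\Gnp)=O(n/p)$ there.
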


For each integer $q\ge2$, we use the notation
\[
 \alpha_q=
 \begin{cases}
  2/5,&q=2,\\
  10/29,&q=3,\\
  1/3,&q\ge4.
 \end{cases}
\]
The complexity bound in \cref{thm:template}, the colourability of the
templates, and matching lower-bound constructions give the following
complete sparse edit-chromatic profile.

\begin{corollary}
\label{cor:profile}
For every fixed integer $q\ge2$, we have $\alpha_q^{\rm edit}=\alpha_q$.
Moreover, for every $\gamma>0$ there is $C=C(q,\gamma)>0$ such that, for
every $p=p(n)$, a.a.s.\ every spanning triangle-free
$H\subseteq\Gnp$ with $\delta(H)\ge(\alpha_q+\gamma)pn$ satisfies
\begin{equation}
 \del_q(H)\le
 \min\left\{C\frac np,
       \left(\frac1{2q}+\gamma\right)pn^2\right\}.
 \label{eq:profile-upper}
\end{equation}
\end{corollary}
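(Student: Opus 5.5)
The corollary has two halves: an upper bound (\cref{eq:profile-upper}, which gives $\alpha_q^{\rm edit}\le\alpha_q$) and a matching lower bound $\alpha_q^{\rm edit}\ge\alpha_q$. For the upper bound we invoke \cref{thm:template} with $\gamma'=\min\{\alpha_q-1/3+\gamma,1/10\}$. Here $\alpha_q-1/3\in\{1/15,1/87,0\}$, so $\delta(H)\ge(\alpha_q+\gamma)pn\ge(1/3+\gamma')pn$. After deleting $Cn/p$ edges, $H^\circ$ maps homomorphically to an Andr\'asfai or Vega graph $T$ with $L(T)\le 1/(3\gamma')$. We need $\chi(T)\le q$ on the relevant range:
\begin{itemize}[nosep]
\item For $q\ge4$ this is the known fact that Andr\'asfai graphs are $3$-colourable and Vega graphs are $4$-colourable.
\item For $q=2,3$ the template is not bounded a priori. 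Instead we use the fact that $H^\circ$ still has large minimum degree on most vertices, and pass the degree condition to the weighted template.
\end{itemize}

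Concretely, for $q=2,3$ we plan to show that a template $T$ receiving a homomorphism from $H^\circ$ can be replaced by a weighted twin-free template whose weighted minimum degree exceeds $\alpha_q+\gamma/2$. Weights are class proportions, and edges between classes count with density $\approx p|X||Y|$, using the a.a.s.\ edge-distribution properties of $\Gnp$. We must discard the few vertices whose degree dropped by more than $\gamma pn/10$; a deletion of $Cn/p$ edges affects only $O(n/(\gamma p^2 n))$ such vertices, which is negligible when $p\gg n^{-1/2}$, and in the sparse regime we compare against the trivial $pn^2$ bound instead. The dense theorems then give the conclusion: by Andr\'asfai--Erd\H{o}s--S\'os, weighted degree above $2/5$ forces $T$ bipartite (only $\Gamma_1=K_2$ qualifies), and by Jin, above $10/29$ it forces $3$-colourability. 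The cleanest way in is probably to read these facts off the explicit minimum weighted degrees of $\Gamma_k$ and $\Upsilon_i^{\mu\nu}$ via $L(T)$ from \cref{sec:compression}. This gives $\del_q(H)\le Cn/p$.

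For the second term, $\min\{\cdot,(1/(2q)+\gamma)pn^2\}$, note that $Cn/p\le(1/(2q)+\gamma)pn^2$ whenever $p\ge\sqrt{C'/n}$. For smaller $p$ we use a random $q$-colouring. A.a.s.\ $e(\Gnp)\le(1/2+\gamma/2)pn^2$ when $pn\to\infty$, so the expected number of monochromatic edges is at most $(1/(2q)+\gamma/2)pn^2$, and some colouring achieves this. When $pn=O(1)$, the $Cn/p$ bound dominates anyway.

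For the lower bound $\alpha_q^{\rm edit}\ge\alpha_q$ we use random blow-ups. Take the extremal dense graph $F$: $C_5$ for $q=2$, H\"aggkvist's graph for $q=3$, and for $q\ge4$ a Vega or Andr\'asfai-type graph of chromatic number $>q$ (or a generalised Mycielski/Kneser-type construction) with weighted minimum degree $\to1/3$. Blow it up with balanced classes and intersect with $\Gnp$. This gives a triangle-free $H$ with $\delta(H)\ge(\alpha_q-\eps)pn$, and by sparse regularity/counting any $q$-colouring leaves $\Omega(pn^2)\gg n/p$ monochromatic edges once $p\gg n^{-1/2}$.

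The main obstacle is the $q\ge4$ lower bound: we need templates with chromatic number exceeding $q$ and minimum degree arbitrarily close to $1/3$. I would take these from Brandt--Thomass\'e-type constructions (e.g.\ Hajnal's Kneser-based graphs).
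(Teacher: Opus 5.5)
Your overall plan matches the paper's: apply \cref{thm:template} with a shifted margin, use random colouring for the $pn^2$ term, and use random blow-ups for $\alpha_q^{\rm edit}\ge\alpha_q$. However, two steps do not work as written.

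First, for $q=2,3$ you say the template is ``not bounded a priori''. You then propose transferring the degree condition to a weighted template and invoking Andr\'asfai--Erd\H{o}s--S\'os and Jin. This overlooks the actual conclusion of \cref{thm:template}, namely $L(T)\le(3\gamma')^{-1}$. Your own choice $\gamma'=\min\{\alpha_q-1/3+\gamma,1/10\}$ is strictly larger than $1/15$ for $q=2$ and strictly larger than $1/87$ for $q=3$. This gives $L(T)<5$, respectively $L(T)<29$. Since $L(\Gamma_k)=3k-1$ and every Vega graph has $L\ge L(\Upsilon_2^{11})=29$, the only possible template is $\Gamma_1=K_2$ when $q=2$, and an Andr\'asfai graph (hence $3$-colourable) when $q=3$. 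That is the paper's entire argument. Besides being redundant, your detour is not sound as sketched. Pairwise densities $e_\Gamma(X,Y)\approx p|X||Y|$ say nothing about sublinear classes. The vertexwise bounds $d_\Gamma(x,Y)\le(1+\eps)p|Y|+\eps pn$ for the $H$-dependent unions $Y$ of neighbouring classes hold outside a small exceptional set only if the number of classes is bounded independently of $n$. That is exactly the information the complexity bound supplies and that you set aside.

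Second, you treat the very sparse range of the $pn^2$ term backwards. If $pn=O(1)$, then $n/p=\Omega(n^2)$ while $pn^2=O(n)$. So the minimum in \eqref{eq:profile-upper} is $(1/(2q)+\gamma)pn^2$, and the $Cn/p$ bound does not help; you must prove this term directly. Random colouring still works whenever $pn^2\to\infty$, since concentration of $e(\Gnp)$ only needs its mean to diverge, not $pn\to\infty$. But if $pn^2=O(1)$, the edge count is not concentrated and $(1/(2q)+\gamma)pn^2$ may be below $1$, so you must show $\del_q(H)=0$. The paper handles this as follows. For $p\le n^{-7/4}$ the expected number of two-edge paths is $O(n^3p^2)=o(1)$, so a.a.s.\ $\Gnp$ is a matching and hence bipartite. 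For $p>n^{-7/4}$ one has $pn^2\to\infty$. The two regimes are split pointwise in $n$, so arbitrary sequences $p(n)$ are covered.

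Finally, for the $q\ge4$ lower bound, Vega and Andr\'asfai graphs cannot serve as templates, since they are at most $4$-chromatic. You need Hajnal's graphs, with chromatic number above $q$ and minimum-degree ratio above $1/3-\eta/2$. You mention these as a fallback, and they are the paper's choice.
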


\begin{remark}
For $q=4$, \cref{cor:profile} gives the conjectured sparse
Brandt--Thomass\'{e} theorem, with coefficient $1/8$.
The case $q=2$ recovers the bipartite conclusion of
\cite[Theorem~3]{ABKR}; the additional colouring conclusions are the
three-partite threshold and the four-partite bound above $1/3$.
The term $Cn/p$ is $o(pn^2)$ precisely when $np^2\to\infty$.
By contrast, the $pn^2$ upper bound follows from random colouring
and concentration of the total number of host edges, with a separate
elementary argument for very small $p$, rather than from the structural
argument.
\end{remark}

We next address the sharpness of \cref{cor:profile}.  The $n/p$ scale in
\eqref{eq:profile-upper} is attained, up to a constant factor, by a
construction of Allen--B\"ottcher--Kohayakawa--Roberts
\cite[Theorem~5]{ABKR}.  For every $\eta>0$ and integer $q\ge2$, there are
$c,c'>0$ such that, whenever $n^{-1/2}/c'\le p\le c'$, a.a.s.\ $\Gnp$ contains a spanning triangle-free graph $H$ with
\[
 \delta(H)>(1/2-\eta)pn
 \quad\text{and}\quad
 \del_q(H)\ge c n/p.
\]
For $0<\gamma<1/2-\alpha_q$, choosing
$0<\eta<1/2-\alpha_q-\gamma$ ensures that $H$ satisfies the minimum-degree
condition in \cref{cor:profile}.  The following theorem shows that the
thresholds $\alpha_q$ cannot be lowered.  It also gives the optimal
coefficient $1/(2q)$ when $\log n/n\ll p\ll n^{-1/2}$.

\begin{theorem}
\label{thm:sharpness}
For each fixed integer $q\ge2$, the following assertions hold.
\begin{enumerate}[label=(\roman*),leftmargin=*]
\item For every $\eta>0$, there is $c_{q,\eta}>0$ such that, whenever
$pn/\log n\to\infty$, a.a.s.\ $\Gnp$ contains a
spanning triangle-free graph $H$ satisfying
\[
 \delta(H)\ge(\alpha_q-\eta)pn
 \quad\text{and}\quad
 \del_q(H)\ge c_{q,\eta}pn^2.
\]
\item For every fixed $0<\gamma<1-\alpha_q$, if
\[
 \frac{pn}{\log n}\longrightarrow\infty,
 \qquad np^2\longrightarrow0,
\]
then a.a.s.\ $\Gnp$ contains a spanning triangle-free
graph $H$ satisfying
\[
 \delta(H)\ge(\alpha_q+\gamma)pn
 \quad\text{and}\quad
 \del_q(H)=\left(\frac1{2q}+o(1)\right)pn^2.
\]
\end{enumerate}
\end{theorem}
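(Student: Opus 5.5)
Both parts rest on random blow-ups: fix a triangle-free template $F$ on vertex set $[k]$ with positive weights, partition $[n]$ into classes $V_1,\dots,V_k$ of the prescribed proportions, and let $H$ consist of all edges of $\Gnp$ between $V_i$ and $V_j$ for $ij\in E(F)$. Such an $H$ is triangle-free because $F$ is. Since $pn\gg\log n$, Chernoff bounds and a union bound show that a.a.s.\ every vertex of $V_i$ has $(1\pm o(1))p|V_j|$ neighbours in each $V_j$. Hence $\delta(H)\ge(1-o(1))\,p\cdot\min_i w(N_F(i))\,n$, where $w(N_F(i))$ is the total weight of the neighbourhood of $i$.

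For part (i), the template is chosen according to $q$.
\begin{itemize}[leftmargin=*]
\item For $q=2$, take $F=C_5$ with equal weights.
\item For $q=3$, take Häggkvist's weighted $4$-chromatic triangle-free graph, whose weighted minimum degree is $10/29$.
\item For $q\ge4$, take a triangle-free graph $F$ with $\chi(F)>q$ that admits weights of minimum weighted degree greater than $1/3-\eta$. Its existence is the dense fact that the chromatic threshold of triangle-free graphs is $1/3$ (Hajnal's Kneser-type construction, or the graphs used by Thomassen and Łuczak).
\end{itemize}
In each case the blow-up has degree at least $(\alpha_q-\eta)pn$.

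For the lower bound on $\del_q(H)$, consider any $\phi:[n]\to[q]$. First build a colouring of $F$ by giving each vertex $i$ a colour $c_i$ that is used on at least a $1/q$ fraction of $V_i$. Since $\chi(F)>q$, some edge $ij$ of $F$ has $c_i=c_j$. The sets $A\subseteq V_i$ and $B\subseteq V_j$ of that colour then have sizes at least $|V_i|/q$ and $|V_j|/q$, which are linear in $n$. A standard a.a.s.\ property of $\Gnp$ for $pn\gg\log n$ (indeed for $pn\to\infty$, using a union bound over $4^n$ pairs of sets with Chernoff) says that all disjoint linear-sized $A,B$ satisfy $e(A,B)\ge(1-o(1))p|A||B|$. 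This gives $\del_q(H)\ge c_{q,\eta}pn^2$.

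For part (ii), the upper bound $\del_q(H)\le(1/(2q)+o(1))pn^2$ follows as in \cref{cor:profile}: a uniformly random $q$-colouring of $\Gnp$ has at most $(1/q+o(1))e(\Gnp)$ monochromatic edges. For the lower bound we need a triangle-free $H$ with degree at least $(\alpha_q+\gamma)pn$ that is ``pseudo-random'' enough that every $q$-partition leaves about $1/q$ of its edges inside the parts. The approach exploits $np^2\to0$: the expected number of triangles in $\Gnp$ is $\Theta(p^3n^3)=o(pn^2)$ edges' worth, and each vertex lies in only $O(p^2n)=o(1)\cdot pn$ expected triangles.

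So set $H=\Gnp$ minus one edge from every triangle. The steps are as follows.
\begin{enumerate}[label=(\alph*),leftmargin=*]
\item Show that a.a.s.\ the number of triangles is $o(pn^2)$, by the first moment together with concentration, e.g.\ Janson's inequality or the second moment.
\item Show that every vertex loses only $o(pn)$ edges, so that $\delta(H)\ge(1-o(1))pn\ge(\alpha_q+\gamma)pn$ because $\gamma<1-\alpha_q$.
\item Show that every $q$-partition $\phi$ of $\Gnp$ has at least $(1/q-o(1))e(\Gnp)$ monochromatic edges. This is the a.a.s.\ discrepancy statement that $\max\mathrm{cut}_q(\Gnp)=(1-1/q+o(1))pn^2/2$ for $pn\to\infty$, again by Chernoff and a union bound over $q^n$ colourings with deviation $o(pn^2)$.
\item Conclude that the deletions cost only $o(pn^2)$ monochromatic edges, giving $\del_q(H)\ge(1/(2q)-o(1))pn^2$.
\end{enumerate}

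I expect step (b), the per-vertex triangle count, to be the main obstacle. The number of triangles at a vertex $v$ is at most the number of edges inside $N(v)$. Given $|N(v)|\approx pn$, the edges inside $N(v)$ are binomial with mean about $p^3n^2/2$, which is $o(pn)$ since $np^2\to0$. However, a union bound over all $n$ vertices requires a tail bound of order $n^{-2}$ at a deviation level $\eps pn$. When $p^3n^2$ is tiny, Chernoff in the form $(\e\mu/t)^t$ with $t=\eps pn\gg\log n$ gives this comfortably. When some vertex still has many triangles, I would switch to a greedy/maximal-matching argument instead: delete a maximal family of edge-disjoint triangles, then remove every edge of any remaining triangle incident to high-loss vertices, and check that this keeps each vertex's loss $o(pn)$.
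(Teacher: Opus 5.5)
Your proposal is correct and follows the paper's proof essentially step for step. For (i) you use random blow-ups of $C_5$, H\"aggkvist's graph and a Hajnal graph with the majority-colour argument. For (ii) you use triangle-stripping combined with the union bound over $q^n$ colourings for $\del_q(\Gnp)$; deleting one edge per triangle instead of every triangle edge changes nothing, since $v$ still loses at most $T_v$ edges.

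The fallback you sketch in step (b) is unnecessary. Conditional on $N_\Gamma(v)=S$ with $|S|\le 2pn$, $T_v$ is $\operatorname{Bin}(\binom{|S|}{2},p)$ with mean $\mu=O(p^3n^2)=o(pn)$. Hence $(\e\mu/t)^t$ with $t=\eps pn$ is $\e^{-\omega(\log n)}$ throughout the whole range $np^2\to0$, $pn\gg\log n$. This is exactly the paper's argument, which takes $t_n=\lceil\sqrt{pn(p^3n^2+\log n)}\rceil$ to obtain an explicit $o(pn)$ threshold.
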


When $np^2\to\infty$, \cref{thm:sharpness}(i) gives
$\del_q(H)=\omega(n/p)$, showing that the thresholds in
\cref{cor:profile} cannot be lowered.

Our proof builds on the sparse regularity and bad-star method of
Allen et al.\ \cite{ABKR}.  A vertex profile records the clusters in
which the vertex has many neighbours.  The structural step is to
control edges not only within a profile class, but also between distinct
classes whose profiles intersect.  An orientation argument, together
with a bad-star estimate allowing different parameters at different
centres, bounds all such edges by $O(n/p)$.  The sharper two-sided
inheritance estimate of \cite{ABHKP} keeps the vertex-exception cost
on the same scale.  The surviving edges then join disjoint independent
profiles, so an auxiliary graph built from these profiles and the reduced
graph admits a homomorphism to an Andr\'{a}sfai or Vega graph by the strong
Brandt--Thomass\'{e} theorem \cite{LPRStrong}.  An average against the
weights of a regular blow-up \cite{LPRRT} bounds the certificate
complexity.  The resulting proof separates the sparse edge-deletion
argument from the deterministic classification step.

The paper is organised as follows.  \Cref{sec:preliminaries} defines the
template graphs and their certificate complexity, proves the deterministic
lemmas, and collects the required random-graph tools.  In
\cref{sec:repair}, we obtain a homomorphism by deleting edges.  We deduce
the structural and chromatic results in \cref{sec:main-proofs}, and prove
the lower bounds in \cref{sec:optimality}.

\section{Preliminaries}\label{sec:preliminaries}

We collect the background results and prove the auxiliary lemmas needed
for the main argument.  We first define the template graphs and their
certificate complexity, then establish two deterministic lemmas, and
finally assemble the random-graph tools.

\subsection{Andr\'{a}sfai and Vega graphs}
\label{sec:compression}

We begin with the graph definitions, following
\cite[Sections~3 and~4.1]{LPRRT}.

\begin{definition}
\label{def:templates}
For an integer $k\ge1$, the \emph{Andr\'{a}sfai graph} $\Gamma_k$ has
vertex set $\mathbb Z/(3k-1)\mathbb Z$.  Distinct vertices $r,s$ are
adjacent if and only if $(r-s)\bmod(3k-1)\in\{k,\ldots,2k-1\}$.
Thus $\Gamma_1=K_2$ and $\Gamma_2=C_5$.

For an integer $i\ge2$, the graph $\Upsilon_i^{00}$ is obtained from
$\Gamma_i$, whose residue $j$ is denoted by $v_j$, by adding an induced
six-cycle $a,v,c,u,b,w$ in this order and two adjacent vertices $x,y$.
All eight added vertices are distinct from the vertices $v_j$.
The vertex $x$ is joined to $a,b,c$, the vertex $y$ is joined to $u,v,w$,
and all edges are present between the two sets in each of the following
pairs.
\[
 (\{a,u\},\{v_0,\ldots,v_{i-1}\}),\quad
 (\{b,v\},\{v_i,\ldots,v_{2i-1}\}),\quad
 (\{c,w\},\{v_{2i},\ldots,v_{3i-2}\}).
\]
There are no other edges.  For $\mu,\nu\in\{0,1\}$, the \emph{Vega graph}
$\Upsilon_i^{\mu\nu}$ is obtained from $\Upsilon_i^{00}$ by deleting $y$
if $\mu=1$ and deleting $v_{2i-1}$ if $\nu=1$.
\end{definition}

In particular, $\Upsilon_2^{11}$ is the Gr\"otzsch graph.  All these graphs
are triangle-free; $\Gamma_1$ is bipartite, each $\Gamma_k$ with $k\ge2$ is
three-chromatic, and every Vega graph is four-chromatic
\cite{LPRStrong,LPRRT}.

For these templates, we define the \emph{certificate complexity} by
\begin{equation}
 L(\Gamma_k)=3k-1,\qquad
 L(\Upsilon_i^{\mu\nu})=27i-19-3\mu-3\nu.
 \label{eq:complexity}
\end{equation}
A blow-up of $T$ replaces each vertex $u$ by an independent set $V_u$
and each edge $uv$ by all edges between $V_u$ and $V_v$, with no other
edges.  Unless stated otherwise, the classes $V_u$ may be empty.
The natural map sends every vertex of $V_u$ to $u$.
For $T=\Gamma_k$, the graph $Q=T$ is $k$-regular.  For $T=\Upsilon_i^{\mu\nu}$,
\cite[Fact~4.2]{LPRRT} gives a $\kappa$-regular blow-up $Q$ of $T$,
where $\kappa=9i-(6+\mu+\nu)$.  Every vertex class is nonempty by the
support identity in \cite[equation~(14)]{LPRRT}, and
\cite[equation~(15)]{LPRRT} gives $|V(Q)|=3\kappa-1=L(T)$.
Here the equation numbers refer to the published version.
In both cases, $Q$ has $L(T)$ vertices and degree $(L(T)+1)/3$.
Thus, for Vega graphs, $L(T)$ is the order of $Q$, not the order
$3i+7-\mu-\nu$ of $T$.  These properties will be used in
\cref{lem:certificate}.  In particular, $L(\Gamma_2)=5$ and
$\min_{i,\mu,\nu}L(\Upsilon_i^{\mu\nu})=L(\Upsilon_2^{11})=29$.
The corresponding degree-to-order ratios of $Q$ are $2/5$ and $10/29$.

A \emph{probability vector on a finite set $X$} is a function
$w:X\to[0,1]$ with $\sum_{x\in X}w(x)=1$.  For $A\subseteq X$, we write
$w(A)=\sum_{x\in A}w(x)$.

As in \cite[Definition~1.1]{LPRStrong}, we say that a graph has
property $D_4$ if, for each $h\in\{1,2,3,4\}$ and every sequence of
$3h$ vertices
(repetitions allowed), some vertex is adjacent to at least $h+1$ terms of
the sequence.  We use the strong Brandt--Thomass\'{e} theorem of
\L{}uczak--Polcyn--Reiher \cite[Theorem~1.2]{LPRStrong}, which states that
every maximal
triangle-free $D_4$ graph is a blow-up of an Andr\'{a}sfai or Vega graph
with all vertex classes nonempty.

\subsection{Deterministic lemmas}\label{sec:deterministic}

The following two lemmas provide a homomorphism to a template graph and
bound its certificate complexity using neighbourhood weights.

\begin{lemma}
\label{lem:compression}
For an integer $t\ge1$ and $\rho>0$, suppose that $R$ is a triangle-free
graph on $[t]$ with $\delta(R)>(1/3+\rho)t$ and that $\calI$ is a family
of independent sets of $R$ with $|I|>(1/3+\rho)t$ for every $I\in\calI$.
The graph $F_0$ obtained from $R$ by adding a vertex $z_I$ adjacent to
every vertex of $I$ for each $I\in\calI$, and joining distinct profile
vertices $z_I,z_J$ exactly when $I\cap J=\varnothing$, with no other
edges added, is triangle-free and has property $D_4$.
Moreover, there exist an Andr\'{a}sfai or Vega graph $T$, a
homomorphism $\pi:F_0\to T$, and a probability vector $w$ on $V(T)$ such
that, for every $u\in V(T)$,
\begin{equation}
 w(N_T(u))>1/3+\rho.
 \label{eq:weighted-margin}
\end{equation}
\end{lemma}

\begin{proof}
We call $[t]$ the core and the vertices $z_I$ the profile vertices.
A triangle using no profile vertex would be a triangle in $R$.  A triangle
using one profile vertex would give an edge of $R[I]$.  A
triangle using two profile vertices and one core vertex would require two
disjoint profiles to contain the same core vertex.  Three profile vertices
would give three pairwise disjoint subsets of $[t]$, each larger than
$t/3$.  Thus $F_0$ is triangle-free.

The degree and profile-size assumptions imply that every vertex of
$F_0$ has more than $(1/3+\rho)t$ neighbours in the core.
For each $h\in\{1,2,3,4\}$ and every sequence $x_1,\ldots,x_{3h}$ of
vertices, with repetitions counted separately, we therefore have
\[
 \sum_{v\in[t]}\bigl|\{j\in[3h]:vx_j\in E(F_0)\}\bigr|
 =\sum_{j=1}^{3h}d_{F_0}(x_j,[t])
 >3h(1/3+\rho)t>ht.
\]
Some core vertex is therefore adjacent to at least $h+1$ terms, so
$F_0$ has $D_4$.

Any maximal triangle-free supergraph $F^*$ of $F_0$ on the same vertex
set retains $D_4$, so the strong Brandt--Thomass\'{e} theorem makes
$F^*$ a blow-up of an Andr\'{a}sfai or Vega graph $T$ with nonempty
classes.  The natural map $\pi:F^*\to T$ induces the weights
$w(u)=|\pi^{-1}(u)\cap[t]|/t$ for $u\in V(T)$.
The sets $\pi^{-1}(u)\cap[t]$, $u\in V(T)$, partition $[t]$, so
$\sum_{u\in V(T)}w(u)=1$ and $w$ is a probability vector; some weights
may be zero.  For every $u\in V(T)$, its nonempty class contains a
vertex $x$, all of whose more than $(1/3+\rho)t$ core neighbours in
$F_0$ lie in classes indexed by $N_T(u)$.  Consequently,
$w(N_T(u))\ge d_{F_0}(x,[t])/t>1/3+\rho$, which proves
\eqref{eq:weighted-margin}; restriction of $\pi$ gives the claimed
homomorphism.
\end{proof}

The next lemma bounds the complexity of the target graph using the
neighbourhood weights.  The regular blow-up provides a second probability
vector with constant neighbourhood weight, against which we average the
assumed lower bounds.

\begin{lemma}
\label{lem:certificate}
For $\rho>0$ and an Andr\'{a}sfai or Vega graph $T$, any probability
vector $w$ on $V(T)$ satisfying $w(N_T(u))>1/3+\rho$ for every
$u\in V(T)$ guarantees
\[
 L(T)<\frac1{3\rho}.
\]
\end{lemma}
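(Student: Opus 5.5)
We plan to average the hypothesis $w(N_T(u))>1/3+\rho$ against the uniform distribution on the regular blow-up $Q$ of $T$ described in \cref{sec:compression}. Let $L=L(T)$. Recall that $Q$ is a blow-up of $T$ with all classes $V_u$ nonempty, that $|V(Q)|=L$, and that $Q$ is $d$-regular with $d=(L+1)/3$. Write $m_u=|V_u|\ge1$ for $u\in V(T)$. For $T=\Gamma_k$ we take $Q=T$, so $m_u=1$.

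First, we record the regularity of $Q$ in terms of $T$. A vertex of $V_u$ is adjacent exactly to $\bigcup_{v\in N_T(u)}V_v$, so $d$-regularity gives
\[
 \sum_{v\in N_T(u)}m_v=d\qquad\text{for every }u\in V(T).
\]
Second, we form the double sum
\[
 S=\sum_{u\in V(T)}m_u\,w(N_T(u))=\sum_{u}\sum_{v\in N_T(u)}m_u w(v).
\]
Swapping the order of summation and using the symmetry of adjacency, we get
\[
 S=\sum_{v}w(v)\sum_{u\in N_T(v)}m_u=d\sum_v w(v)=d.
\]
On the other hand, the hypothesis together with $m_u\ge1$ gives
\[
 S>\Bigl(\tfrac13+\rho\Bigr)\sum_u m_u=\Bigl(\tfrac13+\rho\Bigr)L.
\]
The strict inequality is valid because $V(T)$ is nonempty and each $m_u>0$.

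Combining the two computations of $S$ gives $(L+1)/3>(1/3+\rho)L$, that is, $1/3>\rho L$. Hence $L(T)<1/(3\rho)$, as claimed.

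The only step needing care is the justification of the properties of $Q$ quoted from \cite{LPRRT}. These are: every class of the regular blow-up is nonempty, $Q$ is $\kappa$-regular with $\kappa=(L+1)/3$, and $|V(Q)|=L$. For Andrásfai graphs they are immediate: $\Gamma_k$ is $k$-regular on $3k-1$ vertices. For Vega graphs we rely on \cite[Fact~4.2 and equations~(14),(15)]{LPRRT}, as stated in \cref{sec:compression}. Beyond this, the argument is a direct double count.
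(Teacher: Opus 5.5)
Your proposal is correct and is essentially the paper's proof: the paper averages the hypothesis against the normalized class-size vector $\lambda(u)=|\varphi^{-1}(u)|/L(T)$ of the same regular blow-up $Q$, swaps the double sum using symmetry of adjacency, and obtains $\tfrac13+\rho<\tfrac13+\tfrac1{3L(T)}$. Your unnormalized double count with weights $m_u$ is the same computation scaled by $L(T)$, and both rely on the same quoted facts from \cite{LPRRT} about $Q$.
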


\begin{proof}
The regular blow-up $Q$ described after \eqref{eq:complexity} has
$L(T)$ vertices, degree $(L(T)+1)/3$, and nonempty vertex classes.
Its natural map $\varphi:Q\to T$ induces the probability vector
$\lambda(u)=|\varphi^{-1}(u)|/L(T)$, which is positive on every vertex.
For each $v\in V(T)$ and any $x\in\varphi^{-1}(v)$, the identity
$N_Q(x)=\varphi^{-1}(N_T(v))$ gives
$\lambda(N_T(v))=(L(T)+1)/(3L(T))$.
The $\lambda$-weighted average of the assumed inequalities, together
with the symmetry of adjacency in $T$, now gives
\begin{align*}
 \frac13+\rho
 <\sum_{v\in V(T)}\lambda(v)w(N_T(v))
   =\sum_{u\in V(T)}w(u)\lambda(N_T(u))
 =\frac{L(T)+1}{3L(T)}\sum_{u\in V(T)}w(u)
   =\frac13+\frac1{3L(T)}.
\end{align*}
Hence $L(T)<(3\rho)^{-1}$.
\end{proof}

\subsection{Random-graph tools}\label{sec:random-inputs}

We collect the random-graph properties used in \cref{sec:repair}.
These comprise degree bounds for typical vertices, two-sided regularity inheritance,
codegree bounds for typical edges, and a bound on disjoint bad stars.
Parts (i) and (iii) of \cref{lem:host} follow from
\cite[Lemmas~14 and~15]{ABKR}, and part (ii) uses the sharper two-sided
inheritance bound in \cite[Lemma~1.27]{ABHKP}.  In part (iv), we adapt
\cite[Lemma~16]{ABKR} to sets of any fixed positive linear size and allow
the badness parameter to vary between stars.  We give the details of this
adaptation and then state a sparse regularity lemma that provides both
a triangle-free reduced graph and a vertexwise degree bound for the same
partition.

As in \cite[Section~2]{ABKR}, we call a pair of disjoint vertex sets
$(X,Y)$ \emph{$(\eps,d,p)$-lower-regular in a graph $G$} if every
$X'\subseteq X$ and $Y'\subseteq Y$ with $|X'|>\eps|X|$ and
$|Y'|>\eps|Y|$ satisfy
\[
 e_G(X',Y')>(d-\eps)p|X'||Y'|.
\]
Here \emph{two-sided regularity inheritance} means that, for all but a
controlled number of vertices $x$, restricting both sides of a
lower-regular pair to their neighbourhoods of $x$ in the host graph
$\Gamma$ preserves lower-regularity in $G$, with a larger error parameter,
as made precise in \cref{lem:host}(ii).

\begin{lemma}
\label{lem:host}
For fixed constants $d>0$, $0<\eps,\eta<1/2$, $0<\xi\le1$, and
$q_*\in(0,1/2)$, there is
$\eps_0=\eps_0(d,\eps)\in(0,\eps)$ such that, for every fixed integer $t_1\ge1$,
there are $c,C,L>0$ with the following property.  If
$cn^{-1/2}\le p=p(n)<1$, then a.a.s.\ $\Gamma=\Gnp$ has
all the properties below simultaneously.
The constants are independent of $n$ and $p$; in particular, $\eps_0$
is chosen before the upper bound $t_1$ on the number of clusters.
\begin{enumerate}[label=(\roman*),leftmargin=*]
\item $\Delta(\Gamma)\le2pn$.  For every partition
$V(\Gamma)=V_0\mathbin{\dot\cup}V_1\mathbin{\dot\cup}\cdots
\mathbin{\dot\cup}V_t$, where $1\le t\le t_1$, $|V_0|\le\eps n$, and
$|V_i|=m$ for $i\in[t]$, all but $Cp^{-1}$ vertices $x$ satisfy,
for every $i\in[t]$,
\[
 d_\Gamma(x,V_i)\le(1+\eps)pm,
 \qquad d_\Gamma(x,V_0)\le2\eps pn.
\]

\item For every subgraph $G\subseteq\Gamma$, every partition as in (i),
and every pair $(V_i,V_j)$ with $1\le i<j\le t$ that is
$(\eps_0,d,p)$-lower-regular in $G$, all but $Cp^{-2}$ vertices $x$
have $(N_\Gamma(x,V_i),N_\Gamma(x,V_j))$ $(\eps,d,p)$-lower-regular
in $G$.

\item For every partition as in (i) and every union $A$ of at least
$\xi t$ clusters among $V_1,\ldots,V_t$, all but $Cn/p$ host edges $xy$
satisfy
\[
 |N_\Gamma(x,y,A)|\ge(1-\eta)p^2|A|.
\]

\item For every $A$ as in (iii), every family $\mathcal S$ of pairwise
vertex-disjoint stars in $\Gamma$ whose vertices lie in
$V(\Gamma)\setminus A$, and every assignment of parameters
$q_S\in[q_*,1-q_*]$ to its stars, the following holds.  If every
$S\in\mathcal S$ has at least $Lp^{-1}$ leaves and is
$(q_S,\eta)$-bad, then $|\mathcal S|<Lp^{-1}$.  Here a star $S$ with centre
$x$ is $(q_S,\eta)$-bad with respect to $A$ if there is a set
$D_S\subseteq N_\Gamma(x,A)$ with $|D_S|\le q_Sp|A|$ such that,
for every leaf $y$ of $S$,
\[
 d_\Gamma(y,D_S)\ge(1+\eta)q_Sp^2|A|.
\]
\end{enumerate}
Stars in (iv) are not required to be induced subgraphs of $\Gamma$.
In (i), the exceptional vertex set may depend on the partition, but it
works for all its clusters simultaneously.  In (ii), the exceptional set
may depend on $G$ and the cluster pair; in (iii), it may depend on $A$.
\end{lemma}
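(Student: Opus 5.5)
Parts (i), (ii) and (iii) are cited from \cite[Lemmas~14 and~15]{ABKR} and \cite[Lemma~1.27]{ABHKP}. The plan for them is to fix $\eps_0$ from the inheritance lemma of \cite{ABHKP} depending only on $(d,\eps)$, and then, once $t_1$ is given, take $c,C$ large enough for all of (i)--(iii). Two facts make this work: for $n$ large there are at most $(t_1+1)^n$ partitions with $t\le t_1$, and the cited failure probabilities are $\exp(-\omega(n))$, so a union bound over partitions goes through; the same applies to the choice of $G$ in (ii), since the cited statement is already uniform over subgraphs. For (i), the exceptional set is the union over the $t+1\le t_1+1$ clusters of the per-cluster exceptional sets, so the bound $Cp^{-1}$ is obtained by enlarging $C$ by a factor $t_1+1$. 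The bound $\Delta(\Gamma)\le2pn$ is Chernoff plus a union bound, using $pn\ge cn^{1/2}$. The real work is (iv), the adaptation of \cite[Lemma~16]{ABKR}.

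For (iv), argue by contradiction with a first-moment bound. Suppose there are $s=\lceil Lp^{-1}\rceil$ disjoint stars, each with $\ell=\lceil Lp^{-1}\rceil$ leaves; pass to sub-stars of exactly $\ell$ leaves, which stay bad with the same $D_S$. Expose the edges from the centres $x_S$ into $A$ first. This fixes the neighbourhoods $N_\Gamma(x_S,A)$ and hence the possible sets $D_S$. By Chernoff, a.a.s.\ each such neighbourhood has size at most $2p|A|$ for all vertices, so for each star there are at most $2^{2p|A|}$ choices of $D_S$. Moreover $q_S$ can be discretised to a grid of mesh $\eta q_*/10$ at the cost of replacing $\eta$ by $\eta/2$, which leaves $O(1)$ choices per star.

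Next, the leaves lie outside $A$, and the star edges $x_Sy$ do not involve $A$, so the leaf-to-$D_S$ edges are independent of everything exposed so far. For a fixed leaf $y$, $d_\Gamma(y,D_S)$ is binomial with mean at most $q_Sp^2|A|$. The upper-tail Chernoff bound then gives the probability that it is at least $(1+\eta/2)q_Sp^2|A|$ as at most $\exp(-c_1q_*p^2n)$, where $c_1$ depends on $\eta$ and $\xi$ (through $|A|\ge\xi t m\ge\xi(1-\eps)n$). Because the stars are vertex-disjoint, each leaf is used once and these events are independent across leaves. The star edges contribute a factor $p^{s\ell}$.

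The union bound therefore has to control
\[
 n^{s}\binom{n}{\ell}^{s}p^{s\ell}\,2^{2pns}\,O(1)^s\,e^{-c_1q_*p^2n\,s\ell}
 \le\exp\Bigl(s\bigl(\log n+\ell\log(\e pn/\ell)+2pn+O(1)-c_1q_*p^2n\ell\bigr)\Bigr).
\]
With $\ell=Lp^{-1}$, the negative term is $c_1q_*Lpn$. This beats $2pn$ once $L$ is large. It also beats $\ell\log(\e pn/\ell)\le Lp^{-1}\log(pn)$, because $p^2n\ge c^2$, so $Lp^{-1}\log(pn)\ll Lpn$ up to the choice of $c$ large relative to $\log$ factors. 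Here I expect care is needed: when $p\approx n^{-1/2}$ the term $\ell\log(pn/\ell)=Lp^{-1}\log(p^2n/L)$ compares to $c_1q_*Lpn=c_1q_*Lp^{-1}(p^2n)$. Choosing $c$ large makes $p^2n\ge c^2$ dominate $\log(p^2n)$, so the exponent is at most $-s\cdot\Omega(Lpn)$, which tends to $-\infty$ after summing over $s$.

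The main obstacle is this bookkeeping. The dependence between the choice of $D_S$ (a function of centre edges) and the leaf degrees must be handled by exposing edges in the right order. The union over $D_S$, which costs up to $2^{2pn}$ per star, must be paid for by the $\ell\cdot p^2n$ gain, and this is exactly why the star size $Lp^{-1}$ is needed. Varying $q_S$ between stars is harmless once it is discretised, since $q_S\ge q_*$ keeps every tail exponent uniform. Finally, all the probability bounds hold simultaneously over the polynomially or exponentially many choices of partition and $A$, because each bound is $\exp(-\omega(n))$ after adjusting $L$ and $c$.
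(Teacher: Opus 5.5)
Your proposal is correct. For (i)--(iii) it is the same citation argument as the paper. For (iv) it uses the same first-moment scheme: expose the centres' edges, take a union over witnesses $D_S$, observe that leaf-to-witness edges are unexposed and independent across leaves, and multiply Chernoff bounds over all $s\ell$ leaves. The bookkeeping differs in two places.

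\emph{Leaf sets.} The paper exposes all edges at the centres and chooses each leaf set inside the centre's neighbourhood. Leaves then cost only $2^{2pn}$ per star, the same as witnesses. The logarithm of the union bound becomes $n\log2+(L/p)\log n+O(Ln)-\Omega(L^2n)$, and only $L$ needs to be large. You expose only centre-to-$A$ edges and pay $\binom n\ell p^\ell\le(\e p^2n/L)^\ell$ for the leaves. Your first bound, $Lp^{-1}\log(pn)\ll Lpn$, fails near $p\approx n^{-1/2}$; you then corrected it. With the correct form, the leaf cost $(L/p)\log(\e p^2n/L)$ is absorbed by the gain $c_1q_*(L/p)p^2n$ only because $p^2n\ge c^2$ with $c$ large in terms of $c_1q_*$. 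So your route uses the lower bound on $p$ a second time, whereas in the paper's count the leaf cost does not depend on $p^2n$ at all.

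\emph{The parameter $q_S$.} The paper needs no discretisation. It sets $\widehat q(D)=\max\{q_*,|D|/(p|A|)\}$; any witness for $(q,\eta)$-badness then has $\widehat q(D)\le q$. Hence every leaf satisfies $d_\Gamma(y,D)\ge(1+\eta)\widehat q(D)p^2|A|$, a condition on $D$ alone. Your rounding-up grid, with the loss $\eta\to\eta/2$, also works, at the harmless cost of an $O(1)^s$ factor.

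A few small repairs are needed. First, the claim that a.a.s.\ $|N_\Gamma(x,A)|\le2p|A|$ for all $x$ is not available uniformly over all admissible $A$ when $p\to0$. A union over $2^n$ sets fails, and $A$ may even contain $N_\Gamma(x)$. Use $d_\Gamma(x,A)\le\Delta(\Gamma)\le2pn$ instead, as your display in fact does. Then bound the probability of the bad family intersected with the event that the centres' $A$-degrees are at most $2pn$. That event is measurable with respect to the exposed edges, so this avoids conditioning on a global a.a.s.\ event.

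Second, the failure probability in (iv) is $\exp(-\Omega(L^2n))$, not $\exp(-\omega(n))$. This still suffices against the $2^n$ choices of $A$ once $L$ is large.

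Third, in (i) the set $V_0$ may be far smaller than the linear-size sets to which the degree lemma applies. Embed it in a set of size $\lceil\eps n\rceil$.

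Finally, the cited lemmas behind (i)--(iii) are already uniform over all eligible vertex sets and subgraphs. So no union bound over partitions, and no $\exp(-\omega(n))$ claim about them, is needed.
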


\begin{proof}
The maximum-degree assertion in (i) is \cite[Lemma~14(a)]{ABKR}.  To prove
the remaining assertions in (i) and (iii), we use $\xi'=\xi(1-\eps)$
and an integer
$M>\max\{2t_1,\eps^{-1},(\xi')^{-1}\}$.
Since $0<\eps<1/2$, every cluster $V_i$, $i\ge1$, has size at least
$(1-\eps)n/t_1>n/M$.  By \cite[Lemma~14(d)]{ABKR}, for each such
cluster all but $O(p^{-1})$ vertices have at most $(1+\eps)p|V_i|$
neighbours in it.  The exceptional class $V_0$ is contained in a set
$V_0'$ of size $\lceil\eps n\rceil$, to which the same lemma applies.
Thus all but
$O(p^{-1})$ vertices have at most
$(1+\eps)p|V_0'|\le2\eps pn$ neighbours in $V_0$, for all sufficiently
large $n$.  The union of the exceptional sets for these at most
$t_1+1$ sets has size $O(p^{-1})$, which proves (i).  Since
Lemma~14(d) holds simultaneously for every eligible vertex set, the
partition may be selected after the host is exposed.

For (iii), every union $A$ of at least $\xi t$ clusters satisfies
$|A|\ge\xi tm=\xi(n-|V_0|)\ge\xi'n>n/M$.
Lemma~15 of \cite{ABKR}, with error
parameter $\eta$ and its two reference sets both equal to $A$, leaves at
most $O(n/p)$ exceptional host edges.  The two reference sets in that
lemma are not required to be disjoint.  Its
hypothesis $p=\omega(\log n/n)$ follows from $p\ge cn^{-1/2}$.  This proves
(iii), uniformly over all eligible cluster unions.

For (ii), \cite[Lemma~1.27]{ABHKP} with target parameter $\eps/2$
provides an input threshold depending only on $d$ and $\eps$.
We choose $0<\eps_0<\eps$ with $2\eps_0$ below this threshold,
independently of $t_1$.
The cited result uses non-strict inequalities in the definition of
lower-regularity.  Our $(\eps_0,d,p)$-lower-regularity implies its
$(2\eps_0,d,p)$-lower-regularity, since subsets of relative size at least
$2\eps_0$ have relative size strictly greater than $\eps_0$.
Conversely, its output with parameter $\eps/2$ implies our output with
parameter $\eps$, since the density lower bound $(d-\eps/2)p$ is strictly
larger than $(d-\eps)p$.

For an eligible pair $(V_i,V_j)$ with $|V_i|=|V_j|=m$,
the cited inheritance lemma requires
\[
 m\ge C_0\max\{p^{-2},p^{-1}\log n\},
\]
where $C_0>0$ depends only on $d$ and $\eps$.  Under this condition,
it provides an exceptional vertex set $B_{ij}\subseteq V(\Gamma)$ with
\[
 |B_{ij}|\le C_0\max\{p^{-2},p^{-1}\log(\e n/m)\}.
\]
For every $x\in V(\Gamma)\setminus B_{ij}$, the pair
$(N_\Gamma(x,V_i),N_\Gamma(x,V_j))$ is
$(\eps,d,p)$-lower-regular in $G$, by the parameter conversion above.

Since $m\ge(1-\eps)n/t_1$, the cluster-size condition holds after enlarging
$c$ so that $c^2\ge2C_0t_1/(1-\eps)$.  Indeed, its $p^{-2}$ term is then at most
$(1-\eps)n/(2t_1)$, while
\[
 C_0p^{-1}\log n\le\frac{C_0}{c}\sqrt n\log n=o(n).
\]
For sufficiently large $n$, this second term is also at most
$(1-\eps)n/(2t_1)$, so both terms in the maximum are at most $m$.
For the exceptional-set bound, the inequalities
$1\le\log(\e n/m)\le\log(\e t_1/(1-\eps))$ and $p^{-1}\le p^{-2}$ give
\[
 |B_{ij}|\le C_0\log\!\left(\frac{\e t_1}{1-\eps}\right)p^{-2}
 =O_{d,\eps,t_1}(p^{-2}).
\]
The high-probability event in the cited lemma holds simultaneously for
all subgraphs $G$ and all eligible pairs of vertex sets, so no additional
probability union bound over subgraphs or partitions is needed.
For each fixed $G$ and partition, taking the union of the sets $B_{ij}$
over all eligible cluster pairs gives a common exceptional set whose
size is at most $\binom{t_1}{2}$ times the preceding bound, and hence is
$O_{d,\eps,t_1}(p^{-2})$.  Increasing $C$ proves (ii).

\smallskip
We prove (iv) simultaneously for all $A$ with $|A|\ge\xi'n$.
For a constant $L\ge2$ whose value will be specified below, we write
$r=s_0=\lceil L/(2p)\rceil$.  It suffices to exclude $r$ disjoint bad
stars with exactly $s_0$ leaves, allowing a different admissible parameter
for each star.  Indeed, $L/(2p)\ge1$ gives
$r=s_0\le L/(2p)+1\le L/p$, so any family forbidden by (iv) contains
$r$ stars, each with at least $s_0$ leaves; each star remains bad with
the same parameter and witness after its leaf set is reduced to size $s_0$.
If $r(s_0+1)>n$, there is nothing to prove.

For fixed $A$ and $r$ distinct centres outside $A$, we expose all edges
incident with the centres and restrict attention to exposures in which
each centre has degree at most $2pn$.  This restriction depends only on
the exposed edges.  For each centre, there are then at most $2^{2pn}$
choices for its leaf set and at most $2^{2pn}$ choices for its witness
$D\subseteq N_\Gamma(x,A)$.  Among choices giving vertex-disjoint stars
outside $A$, all edges from the leaves to the witnesses are
unexposed and mutually independent, since no leaf or witness is a centre,
each leaf belongs to only one star, and all witnesses lie in $A$.
This remains true even if witnesses overlap, because distinct leaves
give distinct leaf--witness edges.

For a fixed witness $D$, we write
$\widehat q(D)=\max\{q_*,|D|/(p|A|)\}$ and
$m_D=\widehat q(D)p^2|A|$.  If $D$ witnesses badness for any admissible
$q$, then $\widehat q(D)\le q$.  Thus every leaf must have at least
$(1+\eta)m_D$ neighbours in $D$.  Conditional on the exposure, its degree
into $D$ is binomial with mean $\mu_D=p|D|\le m_D$.  The additive
Chernoff bound gives
\[
 \Pr\bigl(d_\Gamma(y,D)\ge(1+\eta)m_D\mid\text{exposed edges}\bigr)
 \le \exp\left\{-\frac{((1+\eta)m_D-\mu_D)^2}
 {(1+\eta)m_D+\mu_D}\right\}
 \le \exp\{-\tfrac25\eta^2m_D\},
\]
where the last inequality uses $\mu_D\le m_D$ and
$(2+\eta)^{-1}\ge2/5$.  Since $m_D\ge q_*p^2\xi'n$, independence over all
$rs_0$ leaves bounds the probability for fixed leaf sets and witnesses
by $\exp\{-\tfrac25\xi'\eta^2q_*p^2nrs_0\}$.
The parameter $q$ has been eliminated in favour of $D$, so this estimate
allows a different $q$ for each star without any further union bound.
Thus no discretisation of the interval $[q_*,1-q_*]$ is needed.

For each fixed $A$ and choice of centres, the conditional union bound
over leaf sets and witnesses is uniform over all exposures satisfying
the centre-degree restriction, so it also bounds the unconditional
probability of a bad family together with this restriction.
Since $\Delta(\Gamma)\le2pn$ implies the restriction, the union bound
over $A$ and centres bounds the probability of
such a family together with $\Delta(\Gamma)\le2pn$ by
\[
 2^n n^r 2^{4pnr}
 \exp\{-\tfrac25\xi'\eta^2q_*p^2nrs_0\}.
\]
As $L/(2p)\le r=s_0\le L/p$, its logarithm is at most
\[
 n\log2+\frac Lp\log n+4Ln\log2
       -\frac{\xi'\eta^2q_*L^2}{10}\,n.
\]
We choose $L$ large enough that
$\xi'\eta^2q_*L^2/10>\log2+4L\log2+2$, which is possible because the
left-hand side is quadratic in $L$.  Since $p\ge cn^{-1/2}$ gives
$p^{-1}\log n\le c^{-1}\sqrt n\log n=o(n)$, the last display is at most
$-n$ for large $n$.
The failure probability in (iv) is therefore at most
$\e^{-n}+\Pr(\Delta(\Gamma)>2pn)=o(1)$, since excluding such a family
gives $|\mathcal S|<r\le L/p$.  In particular, we have not conditioned the
unexposed edges on the global maximum-degree event.
For sufficiently large $c$ and $C$, the intersection of the finitely
many high-probability events proves all four assertions simultaneously.
\end{proof}

The following form of sparse regularity records both the reduced minimum
degree and the vertexwise degree bound for the same partition.

\begin{lemma}\label{lem:regularity}
For fixed $0<\beta,d,\eps_0<1$ and an integer $t_0\ge1$, there are $c>0$
and an integer $t_1\ge t_0$, depending only on $\beta,d,\eps_0,t_0$,
such that, if $cn^{-1/2}\le p=p(n)<1$, a.a.s.\ every spanning triangle-free $H\subseteq\Gnp$ with
$\delta(H)\ge\beta pn$ has a partition
$V(H)=V_0\mathbin{\dot\cup}V_1\mathbin{\dot\cup}\cdots
\mathbin{\dot\cup}V_t$ and a
triangle-free graph $R$ on $[t]$ with $t_0\le t\le t_1$,
$|V_0|\le\eps_0n$, and $|V_1|=\cdots=|V_t|$.
For each $ij\in E(R)$, the pair $(V_i,V_j)$ is
$(\eps_0,d,p)$-lower-regular in $H$, and
$\delta(R)\ge(\beta-d-\eps_0)t$.  Moreover, for every $i\in[t]$ and
$x\in V_i$,
\[
 d_H\Bigl(x,\bigcup_{\substack{j\in[t],ij\notin E(R)}}V_j\Bigr)
 \le(d+\eps_0)pn.
\]
The union includes $V_i$ itself but not $V_0$.
\end{lemma}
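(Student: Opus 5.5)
We will derive \cref{lem:regularity} from the sparse regularity lemma for subgraphs of $\Gnp$ (Kohayakawa--R\"odl), in the form used in \cite{ABKR}, followed by a cleaning step. Three things are needed. First, with high probability $\Gnp$ is upper-uniform: for some fixed $\eta_0$, all disjoint $X,Y$ with $|X|,|Y|\ge\eta_0 n$ satisfy $e(X,Y)\le(1+\eps')p|X||Y|$. This holds for $p\gg1/n$ by Chernoff and a union bound over $4^n$ pairs, and $p\ge cn^{-1/2}$ is far more than enough. Second, this uniformity passes to every spanning $H\subseteq\Gnp$. Third, for every such $H$ the sparse regularity lemma, applied with a small parameter $\eps'\ll\eps_0,d$ and with at least $t_0$ clusters, gives an equipartition $V_0\cup V_1\cup\dots\cup V_t$ with $t\le t_1(\eps',t_0)$ in which all but $\eps' t^2$ pairs are $(\eps',p)$-regular in the two-sided sense. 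Two-sided regularity implies $(\eps',d',p)$-lower-regularity for any $d'$ at most the relative density. We let $R'$ consist of the regular pairs of relative density at least $d$.

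The reduced graph $R'$ may contain triangles coming only from sparse-regularity artefacts. For $p\ge cn^{-1/2}$ we would remove them with the sparse triangle counting or embedding lemma (the K{\L}R conjecture for triangles, proved by Balogh--Morris--Samotij and Saxton--Thomason). With high probability, every subgraph of $\Gnp$ whose reduced graph has a triangle of $(\eps',d,p)$-lower-regular pairs contains a triangle, provided $\eps'$ is small in terms of $d$ and $p\ge Cn^{-1/2}$. Since $H$ is triangle-free, $R'$ is triangle-free. We take this as a black box from the literature, as \cite{ABKR} do.

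Next comes the vertexwise degree condition. For $x\in V_i$, the $H$-neighbours in $\bigcup_{ij\notin E(R')}V_j$ come from three sources: irregular pairs, pairs of density below $d$, and $V_i$ itself. The density-below-$d$ pairs contribute at most $d\,p m$ per cluster on average. To turn this into a pointwise bound, we would move every vertex $x$ that has more than $(d+\eps_0)pn$ neighbours into non-$R'$ clusters into $V_0$. By upper-uniformity, the total number of edges of $H$ in non-$R'$ pairs is at most $(d+\eps'+2\eps')pn^2/2$ plus the $V_i$-internal edges, which number $O(pn^2/t_0)$. Averaging then shows that at most $O(\eps'/\eps_0)n$ vertices violate the bound, provided $d$ is replaced by $d/2$ in defining $R'$ and $t_0$ is chosen large. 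To preserve equal cluster sizes, we would remove the same number of vertices from every cluster by discarding extra vertices. Lower-regularity survives with parameter $2\eps'\le\eps_0$ because the clusters shrink by at most an $\eps'^{1/2}$ fraction, and $|V_0|\le\eps_0 n$. Each remaining vertex keeps the degree bound, since removing vertices only lowers degrees. Finally we set $R=R'$ on the surviving clusters.

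For the minimum degree of $R$, take any non-exceptional $x\in V_i$. Then $d_H(x)\ge\beta pn$. We subtract at most $2\eps' pn$ edges into $V_0$ (using \cref{lem:host}(i)-type bounds, after also discarding $O(1/p)=o(n)$ atypical vertices) and at most $(d+\eps_0)pn$ edges into non-$R$ clusters. The rest lies in $R$-neighbour clusters, each receiving at most $(1+\eps)pm$. Hence $d_R(i)(1+\eps)pm\ge(\beta-d-\eps_0-2\eps')pn$, and with $tm\le n$ and the constants adjusted this gives $\delta(R)\ge(\beta-d-\eps_0)t$. The main obstacle is the triangle-freeness of $R$, which needs the sparse K{\L}R/counting input at density $p\ge cn^{-1/2}$, exactly the threshold for triangles. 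The other steps are bookkeeping over the choice $\eps'\ll\eps_0,d$ and $t_0$ large.
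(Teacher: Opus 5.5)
\textbf{The gap is in the vertexwise degree bound.} Your averaging step cannot give what you claim. The edges in regular pairs of density below the cutoff are not globally small. Summed over all vertices they contribute up to about $d\,pn^2$ (or $\tfrac d2\,pn^2$ with your cutoff $d/2$). That is the same order as the threshold $(d+\eps_0)pn$ times $n$.

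From this total alone, Markov's inequality bounds the number of vertices whose non-$R'$ degree exceeds $(d+\eps_0)pn$ only by roughly $\tfrac{d/2}{d+\eps_0}\,n$. This is a constant fraction of all vertices, not $O(\eps'/\eps_0)n$. A global edge count says nothing to stop the low-density edges from concentrating on half the vertices. Moving that many vertices into $V_0$ destroys $|V_0|\le\eps_0n$, and also the regularity of the surviving pairs.

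The proposed remedy of cutting at $d/2$ also conflicts with the statement. Pairs of relative density in $[d/2,d)$ would then belong to $R'$, yet they are not $(\eps_0,d,p)$-lower-regular in $H$.

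\textbf{The missing ingredient is the pointwise consequence of regularity.} Suppose $(V_i,V_j)$ is $(\eps',p)$-regular with relative density $d_{ij}<d$. Then at most $\eps'm$ vertices of $V_i$ have more than $(d_{ij}+\eps')pm$ neighbours in $V_j$; otherwise that set together with $V_j$ witnesses irregularity. With this, keep the cutoff at $d$ and discard:
\begin{itemize}
\item the at most $2\sqrt{\eps'}\,t$ clusters lying in more than $\sqrt{\eps'}\,t$ irregular pairs;
\item in each remaining cluster, the at most $\sqrt{\eps'}\,m$ vertices that are atypical in more than $\sqrt{\eps'}\,t$ of their low-density pairs;
\item the $O(1/p)$ vertices with host degree above $(1+\eps')pm$ into some cluster.
\end{itemize}
Every surviving vertex then has non-$R'$ degree at most $(d+\eps'+4\sqrt{\eps'}+2/t)pn$. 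Each cluster loses only $\sqrt{\eps'}\,m+o(m)$ vertices, so equalising sizes is harmless. Markov is adequate only for the irregular and internal contributions, which are globally $O(\eps')$ and $O(1/t)$.

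This is the degree form of sparse regularity. The paper does not reprove it but takes it directly from the vertexwise conclusion of \cite[Lemma~7]{ABKR}.

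\textbf{Minor point on $\delta(R)$.} To get $\delta(R)\ge(\beta-d-\eps_0)t$ exactly, prove the vertexwise bound with slack strictly below $\eps_0$. Also keep $|V_0|$ well below $\eps_0n$, since a typical vertex can have about $p|V_0|$ neighbours in $V_0$, not $2\eps'pn$. The paper builds in this slack through $\theta$ and $\eps_1\le\theta$.

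\textbf{Triangle-freeness step.} Your use of the sparse K{\L}R/embedding lemma for triangles at $p\ge Cn^{-1/2}$ is an acceptable alternative black box. The paper's route is more elementary: the one-sided inheritance lemma \cite[Lemma~8]{ABKR} gives a typical $z\in Z$ whose neighbourhoods in $X$ and $Y$ span an $H$-edge, and hence a triangle.
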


\begin{proof}
We write $\Gamma=\Gnp$ and fix
$0<\theta<\min\{\beta,\eps_0/2\}$.  Then
$\delta(H)\ge\beta pn>(\beta-\theta)pn$, so the strict minimum-degree
hypothesis in \cite[Lemma~10]{ABKR} is satisfied with degree parameter
$\beta-\theta$.  We follow its construction while retaining the
vertexwise conclusion of \cite[Lemma~7]{ABKR}.
We first choose $0<\eps_1<\min\{\theta,d/4,1/4\}$ sufficiently small
for \cite[Lemma~8]{ABKR} with inheritance parameter $d/8$ and density
parameter $d$, and denote by $C_0$ its common constant in the
cluster-size hypothesis and the exceptional-set bound.
The upper bound $t_1$ supplied by
\cite[Lemma~7]{ABKR} with degree
parameter $\beta-\theta$, regularity parameter $\eps_1$, density cutoff
$d$, and lower bound $t_0$ is therefore fixed before we choose $c$.

The upper-density hypothesis of \cite[Lemma~7]{ABKR} holds simultaneously
for all $H\subseteq\Gamma$ on a high-probability host event, by
\cite[Lemma~14(c)]{ABKR}, applied with error $\eps_1^2/1000$ at the
fixed linear set-size scale $\eps_1n/t_1$.  Indeed,
$p\ge cn^{-1/2}$ implies $p=\omega(\log n/n)$.
Lemma~7 gives the required partition and
an $(\eps_1,d,p)$-reduced graph $R$, together with the vertexwise bound.

If three clusters $X,Y,Z$ form a triangle in $R$, we write
$m=|X|=|Y|=|Z|$ and derive a contradiction.
We have $m\ge(1-\eps_1)n/t_1\ge n/(2t_1)$.  We choose $c$ so large
that $C_0/c^2<1/(8t_1)$.  Since $p\ge cn^{-1/2}$, for all sufficiently
large $n$ we have
\[
 C_0\max\{p^{-2},p^{-1}\log n\}
 \le C_0\max\{n/c^2,\sqrt n\log n/c\}
 <\frac{n}{8t_1}\le\frac m4.
\]
Thus the cluster-size hypothesis of \cite[Lemma~8]{ABKR} holds, and
outside fewer than $m/4$ exceptional vertices $z$, the pair
$(N_\Gamma(z,X),N_\Gamma(z,Y))$ is $(d/8,d,p)$-lower-regular in $H$.
Moreover, \cite[Lemma~14(d)]{ABKR} gives
$d_\Gamma(z,X),d_\Gamma(z,Y)\le2pm$ outside $O(p^{-1})=o(n)$ vertices.
For sufficiently large $n$, these additional exceptions number fewer
than $m/4$, so the union of the exceptional sets has size less than
$m/2$.  Lower-regularity of $(Z,X)$ and
$(Z,Y)$ gives $d_H(z,X),d_H(z,Y)\ge(d/2)pm$ for all but at most
$2\eps_1m<m/2$ vertices of $Z$, so some $z\in Z$ satisfies all three
requirements.  Indeed, more than $\eps_1m$ vertices of $Z$ with
fewer than $(d/2)pm$ neighbours in $X$ would form, together with $X$,
a pair of density less than $(d/2)p<(d-\eps_1)p$, contradicting
lower-regularity; the same argument applies to $Y$.
Each of $N_H(z,X)$ and $N_H(z,Y)$ then occupies at least
a $d/4$ fraction of the corresponding host neighbourhood, strictly
exceeding the inherited regularity parameter $d/8$.  Since
$d-d/8>0$, the inherited lower-regularity gives an edge of $H$ between these two sets,
which forms a triangle with $z$, a contradiction.
This argument leaves the partition and $R$ unchanged, so the vertexwise
bound from Lemma~7 remains valid.
Consequently,
\[
 \delta(R)\ge(\beta-\theta-d-\eps_1)t
       \ge(\beta-d-2\theta)t
       \ge(\beta-d-\eps_0)t,
\]
and, for every $x\in V_i$, the number of its neighbours in clusters
not adjacent to $i$ in $R$ is at most
$(d+\eps_1)pn\le(d+\eps_0)pn$.
Since $\eps_1\le\eps_0$, the lower-regularity of each retained pair
and the bound on $|V_0|$ also hold with parameter $\eps_0$.
Here we keep $R$ fixed; we do not add edges for other pairs that may
become lower-regular with the larger parameter $\eps_0$.
All host events used above hold simultaneously for all eligible vertex
sets and subgraphs.  The choices of $\theta$, $\eps_1$, $t_1$, and $c$
depend only on $\beta,d,\eps_0,t_0$, as required.
\end{proof}

\section{Homomorphisms after edge deletions}\label{sec:repair}

We use the random-graph properties from \cref{lem:host} to prepare a
triangle-free subgraph for the compression lemma.  Given a sparse regular
partition with clusters $V_1,\ldots,V_t$ of common size $m$, we assign
each typical vertex $x$ the profile
$I(x)=\{i\in[t]:d_H(x,V_i)>\tau pm\}$, where $\tau>0$ is a fixed
constant chosen below.  
We show that deleting $O(n/p)$ edges leaves only edges joining vertices
whose profiles are disjoint independent sets in the reduced graph.
\Cref{lem:compression} then gives the required homomorphism.

\begin{lemma}
\label{lem:repair}
For fixed $0<\rho<\gamma\le1/10$, there are $c,C>0$, depending only on
$\rho$ and $\gamma$, such that, whenever
$cn^{-1/2}\le p=p(n)<1$, a.a.s.\ every spanning
triangle-free $H\subseteq\Gnp$ with
$\delta(H)\ge(1/3+\gamma)pn$ has a subgraph obtained by deleting at most
$Cn/p$ edges that admits a homomorphism to an Andr\'{a}sfai or Vega graph
$T$ with a probability vector $w$ on $V(T)$ such that, for every
$u\in V(T)$,
\[
 w(N_T(u))>1/3+\rho.
\]
\end{lemma}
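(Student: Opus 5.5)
We apply \cref{lem:regularity} with $\beta=1/3+\gamma$ and small constants $d,\eps_0$, chosen so that $\beta-d-\eps_0>1/3+\rho$, giving a partition $V_0,V_1,\ldots,V_t$ of common cluster size $m$ and a triangle-free reduced graph $R$ with $\delta(R)>(1/3+\rho)t$. We then apply \cref{lem:host} with this $d$ and suitable $\eps,\eta$, and with $t_1$ from \cref{lem:regularity}. Next we discard exceptional vertices: those violating \cref{lem:host}(i) for this partition, those in $V_0$, and those failing inheritance in \cref{lem:host}(ii) for some edge of $R$. There are $O(p^{-2})$ exceptional vertices outside $V_0$, and deleting all their edges costs $O(p^{-2}\cdot pn)=O(n/p)$. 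Edges at $V_0$ are the problem here: $|V_0|$ is linear in $n$. So instead of deleting them, we also assign profiles to vertices of $V_0$ and treat them like other vertices; only host-atypical vertices are removed.

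For each typical vertex $x$, set $I(x)=\{i:d_H(x,V_i)>\tau pm\}$ with $\tau$ small. Since $d_H(x,V_0)\le2\eps pn$ and $d_H(x,V_i)\le(1+\eps)pm$, the minimum-degree condition gives $|I(x)|\ge(1/3+\gamma-O(\tau+\eps))t>(1/3+\rho)t$. We then show $I(x)$ is independent in $R$. If $ij\in E(R)$ with $i,j\in I(x)$, inheritance gives $(N_\Gamma(x,V_i),N_\Gamma(x,V_j))$ lower-regular in $H$. Since $N_H(x,V_i)$ is a $\tau/(1+\eps)$ fraction of the host neighbourhood, exceeding $\eps$, we obtain an edge of $H$ inside $N_H(x)$ and hence a triangle. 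We take the profile family $\calI=\{I(x)\}$.

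The main step is deleting $O(n/p)$ edges so that every remaining edge $xy$ has $I(x)\cap I(y)=\varnothing$. Given this, mapping $x$ to $z_{I(x)}$ is a homomorphism into the graph $F_0$ of \cref{lem:compression}, and that lemma supplies $T$, $\pi$ and $w$. Call an edge $xy$ of $H$ \emph{bad} if some cluster $i$ lies in $I(x)\cap I(y)$. For such $i$, the set $D=N_H(x,V_i)$ has size more than $\tau pm$, and $y$ has more than $\tau pm$ neighbours in $V_i$ but none in $D$, since $H$ is triangle-free. Thus $y$ sends to $N_\Gamma(x,V_i)\setminus D$ many more than its expected share. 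To fit \cref{lem:host}(iv), we use a union $A$ of clusters of linear size together with the complement witness $N_\Gamma(x,A)\setminus N_H(x,A)$. We take $A=\bigcup_{i\in I(x)}V_i$, or more robustly $A$ ranging over the finitely many unions of at least $\xi t$ clusters. By \cref{lem:host}(iii), at most $O(n/p)$ edges have small host codegree into $A$. For a typical bad edge, $y$ has roughly $p^2|A|$ host common neighbours with $x$ in $A$, but its $H$-neighbours in $N_\Gamma(x,A)$ avoid $N_H(x,A)$. Summing over the clusters of $I(x)\cap I(y)$ then shows that $y$ is over-concentrated on the witness $D_S=N_\Gamma(x,A)\setminus N_H(x,A)$, whose relative size is $q_S=1-d_H(x,A)/(p|A|)$, with $q_S\in[q_*,1-q_*]$. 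This is exactly why \cref{lem:host}(iv) allows a different parameter at each centre.

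Next we orient each bad edge towards one endpoint, for instance the one with the larger profile intersection weight, or the endpoint for which the codegree surplus appears, and group the edges into stars at their centres. A greedy disjoint-stars argument then bounds the total. Repeatedly extract vertex-disjoint stars with at least $Lp^{-1}$ leaves; each round yields fewer than $Lp^{-1}$ stars, and the edges in stars below the leaf threshold number $O(n\cdot Lp^{-1})$. This should bound the bad edges by $O(n/p)$, uniformly over the finitely many choices of $A$.

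I expect the main obstacle to be this step. The difficulty is making the badness inequality $d_\Gamma(y,D_S)\ge(1+\eta)q_Sp^2|A|$ hold quantitatively from a single shared cluster. That cluster contributes only a $1/t$ fraction of $A$, so one may need to choose $A$ depending on the pair of profiles, finitely many choices in all, and apply (iv) for each. A further difficulty is converting the star decomposition into an $O(n/p)$ edge bound, which requires handling stars whose leaves are not disjoint from other centres.
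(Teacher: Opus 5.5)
There is a genuine gap. Your architecture matches the paper's: profiles $I(x)$ are also assigned to typical vertices of $V_0$, profiles are independent by two-sided inheritance, the compression lemma finishes, and \cref{lem:host}(iv) is used with a centre-dependent parameter. But the step you flag as the main obstacle is the heart of the proof, and as proposed it does not go through.

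Your one-sided claim, that $y$ is over-concentrated on $D_x=N_\Gamma(x,A)\setminus N_H(x,A)$, is not justified. Triangle-freeness only puts into $D_x$ those $H$-neighbours of $y$ that lie in $N_\Gamma(x,A)$. Nothing prevents $N_H(y,A)$ from largely avoiding $N_\Gamma(x)$, in which case most of $N_\Gamma(x,y,A)$ lies in $D_y$ instead. The same objection applies to your single-cluster heuristic with $D=N_H(x,V_i)$.

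The missing idea is a two-sided budget. Every host common neighbour of an $H$-edge $xy$ is missing from $N_H(x)$ or from $N_H(y)$, so $N_\Gamma(x,y,A)\subseteq D_x\cup D_y$. Let $x$ have profile $I$ and $y$ have profile $J$. Suppose $|D_x|\le q_Ip|A|$ and $|D_y|\le q_Jp|A|$ with $(1+\eta)(q_I+q_J)<1-\eta$. Then the codegree bound $|N_\Gamma(x,y,A)|\ge(1-\eta)p^2|A|$ forces either $|N_\Gamma(x,y,A)\cap D_x|\ge(1+\eta)q_Ip^2|A|$ or $|N_\Gamma(x,y,A)\cap D_y|>(1+\eta)q_Jp^2|A|$. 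Orienting each edge towards such an endpoint makes every incoming star centred at $x$ bad with witness $D_x$.

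The reduced graph is what keeps $q_I+q_J$ bounded away from $1$, and your proposal never produces this inequality. For occurring profiles $I,J$ with $I\cap J\ne\varnothing$ (including $I=J$), take $A=\bigcup_{i\in I\cap J}V_i$, one choice per profile pair.
\begin{itemize}
\item For any $k\in I\cap J$, independence of both profiles gives $I\cup J\subseteq[t]\setminus N_R(k)$. Hence $|I\cup J|<(2/3-\rho)t$ and $|I\cap J|>3\rho t$.
\item So $A$ has linear size with $\xi=3\rho$, and the single-cluster difficulty disappears.
\item The degree count with $q_X=(|X|-(1/3+\rho)t)/|I\cap J|$ gives $|D_x|\le q_Xp|A|$ for $x$ of profile $X$, and $q_I+q_J<1-3\rho$.
\end{itemize}

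Two further ingredients are missing.
\begin{itemize}
\item \Cref{lem:host}(iv) requires all star vertices to lie outside $A$. This holds because no vertex with independent profile $I$ lies in $\bigcup_{i\in I}V_i$. By the vertexwise bound of \cref{lem:regularity}, such a vertex would have at most $(d+\eps)pn$ neighbours there, yet it has more than $\tau pm|I|$. This fails for your choice $A=\bigcup_{i\in I(x)}V_i$, since the other endpoint need not avoid that set.
\item The counting should use one maximal family of vertex-disjoint incoming stars with exactly $s=\lceil L/p\rceil$ leaves, not repeated rounds. Such a family has fewer than $L/p$ members, so deleting all edges at its vertices costs $O(n/p)$. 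By maximality, the remaining oriented graph has every indegree below $s$, hence at most $sn$ edges.
\end{itemize}
Adding the $O(n/p)$ codegree exceptions from \cref{lem:host}(iii) and summing over the at most $4^{t_1}$ profile pairs then gives the $O(n/p)$ bound.
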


\begin{proof}
We mark edges for deletion and leave $H$ unchanged until the final step.
Thus all degree estimates below refer to the original graph $H$.

We choose $0<q_*<(\gamma-\rho)/2$ and $0<\eta<1/2$ such that
\begin{equation}
 (1+\eta)(1-3\rho)<1-\eta.                      \label{eq:eta-choice}
\end{equation}
For example, $\eta=\rho$ satisfies this inequality.
With $\tau=10d$ and $\xi=3\rho$, the remaining parameters $d,\eps>0$
are chosen to satisfy
\begin{equation}
 \eps<\min\{d,1/2\},\quad
 \tau+4\eps<(\gamma-\rho)/2,\quad
 \tau(1/3+\rho)(1-\eps)>d+\eps.
 \label{eq:hierarchy}
\end{equation}
These conditions hold if $d$ is sufficiently small and then $\eps$ is
sufficiently small relative to $d$.
The inheritance parameter $\eps_0\le\eps$ supplied by \cref{lem:host}
for $d,\eps,\eta,\xi,q_*$ determines an upper bound $t_1$ through
\cref{lem:regularity}, with $\beta=1/3+\gamma$, regularity parameter
$\eps_0$, density cutoff $d$, and $t_0=1$.  Once $t_1$ is fixed, we
choose $c$ sufficiently large for both lemmas and use the constant $L$
from \cref{lem:host}(iv).
The remainder of the proof takes place on the intersection of these two
high-probability events for $\Gamma=\Gnp$, with $H\subseteq\Gamma$
satisfying the assumptions.
These events hold uniformly over the eligible subgraphs and partitions,
so they also apply to the profile-dependent sets chosen below.
\Cref{lem:regularity} gives a partition
$V(H)=V_0\mathbin{\dot\cup}V_1\mathbin{\dot\cup}\cdots
\mathbin{\dot\cup}V_t$ with $t\le t_1$, $|V_0|\le\eps n$, and
$|V_1|=\cdots=|V_t|=m$, together with a triangle-free reduced graph $R$
satisfying
\begin{equation}
 \delta(R)>(1/3+\rho)t.                         \label{eq:R-margin}
\end{equation}
Indeed, \cref{lem:regularity} gives
$\delta(R)\ge(1/3+\gamma-d-\eps_0)t$, which is stronger than
\eqref{eq:R-margin} by \eqref{eq:hierarchy}.  For the same partition, it also gives, for every
$i\in[t]$ and $x\in V_i$,
\begin{equation}
 d_H\Bigl(x,\bigcup_{\substack{j\in[t], ij\notin E(R)}}V_j\Bigr)
 \le(d+\eps_0)pn
 \le(d+\eps)pn.                                  \label{eq:degree-form}
\end{equation}

The exceptional set $W$ in \cref{lem:host}(i) has size $O(p^{-1})$.
We mark every edge incident with $W$, at a cost of
$O(p^{-1}\cdot pn)=O(n)$ edges because $\Delta(\Gamma)\le2pn$.
For $x\notin W$, its \emph{profile} is
\[
 I(x)=\{i\in[t]:d_H(x,V_i)>\tau pm\},
\]
and the corresponding profile
classes are $N_I=\{x\notin W:I(x)=I\}$ for $I\subseteq[t]$.
Every profile $I$ with $N_I\ne\varnothing$ satisfies
\begin{equation}
 |I|>\left(\frac13+\frac{\gamma+\rho}{2}\right)t
     >(1/3+\rho)t.                               \label{eq:profile-margin}
\end{equation}
Indeed, if $x\in N_I$ and
$|I|\le(1/3+(\gamma+\rho)/2)t$, we obtain a contradiction as follows.
The degree bounds in \cref{lem:host}(i) and the definition of $I$ give
\begin{align*}
 d_H(x)
 &\le(1+\eps)pm|I|+2\eps pn+\tau pm(t-|I|)\\
 &\le pm|I|+(\eps+\tau)pmt+2\eps pn\\
 &\le\bigl(1/3+(\gamma+\rho)/2+\eps+\tau\bigr)pmt+2\eps pn\\
 &\le\bigl(1/3+(\gamma+\rho)/2+\tau+3\eps\bigr)pn\\
 &<(1/3+\gamma)pn.
\end{align*}
Here the second inequality uses $|I|\le t$ and $t-|I|\le t$,
the third uses the assumed upper bound on $|I|$, and the fourth uses
$mt\le n$.  The final inequality follows from $\tau=10d$ and
\eqref{eq:hierarchy}.  This contradicts the minimum-degree assumption.

For each $I\subseteq[t]$, we write $S_I=\bigcup_{i\in I}V_i$.
If $I$ is independent in $R$, then
\begin{equation}
 N_I\cap S_I=\varnothing.                       \label{eq:outside}
\end{equation}
Indeed, if $x\in N_I\cap V_i$ with $i\in I$, then independence of $I$ and
\eqref{eq:degree-form} give $d_H(x,S_I)\le(d+\eps)pn$, whereas
\eqref{eq:profile-margin} and the definition of $I$ give
\begin{align*}
 d_H(x,S_I)&>\tau pm|I|
 >\tau(1/3+\rho)pmt
 \ge\tau(1/3+\rho)(1-\eps)pn
  >(d+\eps)pn,
\end{align*}
by \eqref{eq:hierarchy}, a contradiction.

The vertices outside $W$ whose profiles are not independent in $R$ form
a set $B$, and we mark all edges incident with this set.
If $ij\in E(R[I])$ and $x\in N_I$, then $x$ must be a two-sided
inheritance exception for $(V_i,V_j)$.  Otherwise the two sets
$N_H(x,V_i)$ and $N_H(x,V_j)$ each occupy more than a
$\tau/(1+\eps)$ proportion of the corresponding host neighbourhood, because
$d_H(x,V_i),d_H(x,V_j)>\tau pm$ while
$d_\Gamma(x,V_i),d_\Gamma(x,V_j)\le(1+\eps)pm$.  The hierarchy gives
$\tau/(1+\eps)>\eps$ and $d-\eps>0$.  The inherited
$(\eps,d,p)$-lower-regular pair therefore contains an $H$-edge between
these two sets, producing a triangle with $x$, contradicting $H$'s triangle-freeness.
For each $ij\in E(R)$, let $E_{ij}$ be the set of vertices $x$ for which
$(N_\Gamma(x,V_i),N_\Gamma(x,V_j))$ is not
$(\eps,d,p)$-lower-regular in $H$.  Applying \cref{lem:host}(ii) with
$G=H$ gives $|E_{ij}|\le Cp^{-2}$ for every such pair.
Every $x\in B$ has a profile $I(x)$ containing an edge $ij$ of $R$,
and the preceding argument shows that $x\in E_{ij}$.  Hence $B\subseteq\bigcup_{ij\in E(R)}E_{ij},$ and
\[
 |B|\le\sum_{ij\in E(R)}|E_{ij}|
 \le e(R)Cp^{-2}
 \le\binom{t_1}{2}Cp^{-2}=O(p^{-2}).
\]
Here $C$ and $t_1$ are independent of $n$ and $p$, and the estimate does
not require the sets $E_{ij}$ to be disjoint.  Since
$\Delta(\Gamma)\le2pn$, the edges incident with $B$ cost at most
$|B|\Delta(\Gamma)=O(p^{-2}\cdot pn)=O(n/p)$.

It remains to control edges between intersecting independent profiles.
For occurring independent profiles $I,J$ with $I\cap J\ne\varnothing$,
including the case $I=J$, we write $K=I\cap J$ and $A=S_K$.  For $k\in K$,
independence shows that $I\cup J\subseteq[t]\setminus N_R(k);$ \eqref{eq:R-margin} and
\eqref{eq:profile-margin} give
\begin{equation}
 |I\cup J|<(2/3-\rho)t,
 \quad |K|=|I|+|J|-|I\cup J|>3\rho t.                               \label{eq:overlap-size}
\end{equation}
Thus $A$ is a union of more than $\xi t$ clusters, as required by
\cref{lem:host}(iii) and (iv).  By \eqref{eq:outside}, every vertex of
$N_I\cup N_J$ lies outside $A$.

The parameters $q_X=(|X|-(1/3+\rho)t)/|K|$, for $X\in\{I,J\}$,
measure the excess profile size relative to the overlap.
By \eqref{eq:profile-margin} and $|K|\le t$, we have
$q_X>(\gamma-\rho)/2>q_*$.  Moreover, \eqref{eq:overlap-size} gives
\begin{equation}
 \begin{aligned}
  q_I+q_J
  &=1+\frac{|I\cup J|-(2/3+2\rho)t}{|K|}
  <1-\frac{3\rho t}{|K|}\le1-3\rho.
 \end{aligned}                                  \label{eq:qsum-bound}
\end{equation}
Since each parameter exceeds $q_*$ and their sum is less than $1$,
each is also less than $1-q_*$.
Thus both parameters lie in the interval covered by
\cref{lem:host}(iv), and \eqref{eq:eta-choice} gives
\begin{equation}
 (1+\eta)(q_I+q_J)<1-\eta.                         \label{eq:qsum}
\end{equation}

For every $x\in N_I\cup N_J$, the set
$D_x=N_\Gamma(x,A)\setminus N_H(x,A)$ records its missing host
neighbours in $A$.  We claim that $|D_x|\le q_Xp|A|$ for
$X\in\{I,J\}$ and $x\in N_X$.
If, for example, $u\in N_I$ and $|D_u|>q_Ip|A|$, since $A\subseteq S_I$, every vertex of $D_u$ is
a host neighbour of $u$ in $S_I$ but not an $H$-neighbour, so
$d_H(u,S_I)\le d_\Gamma(u,S_I)-|D_u|$.
Since $u\notin W$, \cref{lem:host}(i) gives
$d_\Gamma(u,S_I)\le(1+\eps)pm|I|$ and
$d_H(u,V_0)\le2\eps pn$.  For each $i\notin I$, the definition of the
profile gives $d_H(u,V_i)\le\tau pm$.  Splitting the degree over these
three parts and using $|A|=m|K|$ and
$q_I|K|=|I|-(1/3+\rho)t$, we obtain
\begin{align*}
 d_H(u)
 &=d_H(u,S_I)+\sum_{i\in[t]\setminus I}d_H(u,V_i)+d_H(u,V_0)\\
 &\le(1+\eps)pm|I|-|D_u|+\tau pm(t-|I|)+2\eps pn\\
 &<(1+\eps)pm|I|-pm\bigl(|I|-(1/3+\rho)t\bigr)
       +\tau pm(t-|I|)+2\eps pn\\
 &\le(1/3+\rho+\eps+\tau)pmt+2\eps pn\\
 &\le(1/3+\rho+\tau+3\eps)pn\\
 &<(1/3+\gamma)pn.
\end{align*}
The final inequality follows from
\eqref{eq:hierarchy}.
This contradicts the minimum-degree assumption.
The same argument applies to vertices in $N_J$.

We now orient the nonexceptional edges so that every incoming star is
bad, and then use the bound on vertex-disjoint bad stars to show that
there are only $O(n/p)$ edges to delete for this profile pair.
There are $O(n/p)$ host edges that fail the codegree estimate in
\cref{lem:host}(iii) for $A$.  We bound these separately and orient
each remaining $H$-edge $uv$ with $u\in N_I$ and $v\in N_J$;
when $I=J$, the endpoint labels are arbitrary.
Triangle-freeness gives $N_\Gamma(u,v,A)\subseteq D_u\cup D_v$.
The edge $uv$ is oriented towards $u$ if
\[
 |N_\Gamma(u,v,A)\cap D_u|
 \ge(1+\eta)q_Ip^2|A|,
\]
and towards $v$ otherwise.  In the second case, the codegree bound and
\eqref{eq:qsum} give
\begin{align*}
 |N_\Gamma(u,v,A)\cap D_v|
 &\ge |N_\Gamma(u,v,A)|-|N_\Gamma(u,v,A)\cap D_u|\\
 &>\bigl((1-\eta)-(1+\eta)q_I\bigr)p^2|A|\\
 &>(1+\eta)q_Jp^2|A|.
\end{align*}
Since $D_x\subseteq N_\Gamma(x,A)$, every incoming neighbour $y$ of $x$
satisfies $d_\Gamma(y,D_x)=|N_\Gamma(x,y,A)\cap D_x|$.
For $X\in\{I,J\}$ and $x\in N_X$, the orientation and the bound on
$|D_x|$ therefore make every incoming star centred at $x$
$(q_X,\eta)$-bad, with witness $D_x$.  All its vertices lie outside $A$,
so \cref{lem:host}(iv) applies.

With $s=\lceil Lp^{-1}\rceil$ and $L$ from \cref{lem:host}(iv),
a greedy selection gives a maximal family of vertex-disjoint incoming
stars with exactly $s$ leaves.  By that assertion, the family
has fewer than $L/p$ members, even though the parameter may differ
between stars centred in $N_I$ and in $N_J$.
The number of edges incident with their vertices is at most
$O(p^{-1})(s+1)\Delta(\Gamma)=O(n/p)$.
The oriented subgraph on the remaining vertices has indegree less than
$s$ at every vertex by maximality.  Thus at most
$sn=O(n/p)$ oriented edges remain.  If $I\ne J$, every edge between
$N_I$ and $N_J$ is either a codegree exception, a nonexceptional edge
incident with a vertex of the selected stars, or a remaining oriented
edge.  Adding the three bounds above therefore gives
\begin{equation}
 e_H(N_I,N_J)
 \le Cn/p+(L/p)(s+1)\Delta(\Gamma)+sn
 =O(n/p).                                       \label{eq:overlap-bound}
\end{equation}
If $I=J$, the same argument gives $e(H[N_I])=O(n/p)$.
Thus all edges between intersecting independent profiles can be marked
at a cost of $O(n/p)$ per profile pair.

We therefore mark all edges between intersecting independent profiles,
including edges within each profile class.  By \eqref{eq:overlap-bound} and its
within-class counterpart, each profile pair contributes $O(n/p)$ edges.
As there are at most $2^t\cdot2^t=4^t\le4^{t_1}$ ordered
profile pairs, these edges cost $O(n/p)$ in total.  Together with
the $O(n)$ edges incident with $W$ and the $O(n/p)$ edges incident with
$B$, the total number of marked edges is $O(n/p)$.

The spanning graph $H^\circ$ obtained by deleting the marked edges
has edges only between vertices with disjoint independent profiles.
By \eqref{eq:R-margin} and \eqref{eq:profile-margin},
\cref{lem:compression} applies to $R$ and the family of occurring
independent profiles, giving the auxiliary graph $F_0$ and a
homomorphism $\pi:F_0\to T$.  Define $f(x)=z_{I(x)}$ for
$x\notin W\cup B$.  Every edge $xy\in E(H^\circ)$ has both endpoints
outside $W\cup B$ and satisfies $I(x)\cap I(y)=\varnothing$, so
$z_{I(x)}z_{I(y)}\in E(F_0)$ by the definition of $F_0$.
Since the vertices in $W\cup B$ are isolated, assigning them arbitrary
images in $V(F_0)$ extends $f$ to a homomorphism $H^\circ\to F_0$.
Thus the required homomorphism is the composition
\[
 H^\circ\xrightarrow{\,f\,}F_0\xrightarrow{\,\pi\,}T.
\]
The probability vector $w$ supplied by the same lemma
has the required neighbourhood weights.  All constants
depend only on $\rho$ and $\gamma$.
\end{proof}

\section{Proofs of the main results}
\label{sec:main-proofs}

The repair lemma and the discreteness of the possible template
complexities together yield the structural theorem, from which the
edit-chromatic profile follows by excluding templates whose chromatic
number exceeds the prescribed number of parts.  The lower-bound constructions used to
prove $\alpha_q^{\rm edit}\ge\alpha_q$ are given in
\cref{sec:optimality}.

\begin{proof}[Proof of \cref{thm:template}]
The possible template complexities form the set
\[
 \calL=\{3k-1:k\ge1\}\cup
 \{27i-19-3\mu-3\nu:i\ge2,\ \mu,\nu\in\{0,1\}\}.
\]
Since $\calL$ is an unbounded set of positive integers, it has a least
member $L_+$ strictly larger than $(3\gamma)^{-1}$.  The inequality
$(3L_+)^{-1}<\gamma$ therefore allows us to choose $\rho$ such that
\begin{equation}
 \frac1{3L_+}<\rho<\gamma.                         \label{eq:rho-choice}
\end{equation}
We fix this choice of $\rho$ as a function of $\gamma$ alone, so all
constants supplied by \cref{lem:repair} depend only on $\gamma$.
For the constant $c$ supplied by \cref{lem:repair} for $\rho,\gamma$
and $cn^{-1/2}\le p<1$, that lemma and \cref{lem:certificate} give
$L(T)<(3\rho)^{-1}<L_+$.  Since
$L(T)\in\calL$ and $L_+$ is the first member of $\calL$ above
$(3\gamma)^{-1}$, we obtain $L(T)\le(3\gamma)^{-1}$.

For $p=1$, the bound $\delta(H)\ge(1/3+\gamma)n>(1/3+\rho)n$
allows us to apply \cref{lem:compression} with $R=H$, $t=n$, and
$\calI=\varnothing$, obtaining a homomorphism $H\to T$ and a
probability vector $w$ satisfying \eqref{eq:weighted-margin}.
The same application of
\cref{lem:certificate} gives $L(T)\le(3\gamma)^{-1}$ without deleting
any edges.

It remains to handle $p<cn^{-1/2}$.  If $p\le2/n$, then
$e(\Gnp)\le\binom n2\le n/p$ deterministically.  If
$2/n<p<cn^{-1/2}$, the mean $p\binom n2$ is larger than $n-1$,
so Chernoff's inequality gives $e(\Gnp)\le pn^2$ with failure
probability $\exp\{-\Omega(n)\}$.  Since
$pn^2=(np^2)n/p<c^2n/p$, we have $e(\Gnp)=O(n/p)$.
Thus deleting every edge costs $O(n/p)$ and leaves an empty graph that
maps to $\Gamma_1=K_2$, whose complexity satisfies
$L(K_2)=2\le(3\gamma)^{-1}$ because $\gamma\le1/10$.
A sufficiently large choice of $C$ covers all three ranges.
For an arbitrary sequence $p=p(n)$, the corresponding subsequences
satisfy the estimates above, so the failure probability tends to zero
along the full sequence, completing the proof.
\end{proof}

\begin{proof}[Proof of \cref{cor:profile}]
We first prove the $Cn/p$ bound using \cref{thm:template}.

For $q=2$, we choose
$\gamma'\in(1/15,\min\{1/10,1/15+\gamma\}]$ and apply
\cref{thm:template} with margin $\gamma'$, which is permitted because
$1/3+\gamma'\le2/5+\gamma$.  The resulting bound
$L(T)\le(3\gamma')^{-1}<5$ leaves only $\Gamma_1=K_2$, so the repaired
graph is bipartite.  For $q=3$, a choice of
$\gamma'\in(1/87,\min\{1/10,1/87+\gamma\}]$ similarly gives
$1/3+\gamma'\le10/29+\gamma$, and \cref{thm:template} yields
$L(T)\le(3\gamma')^{-1}<29$.  Since every Vega graph has complexity
at least $29$, the repaired graph maps to an Andr\'{a}sfai graph and is
three-colourable.  For $q\ge4$, the choice
$\gamma'=\min\{\gamma,1/10\}$ ensures that the given minimum-degree
bound is at least $(1/3+\gamma')pn$, so \cref{thm:template} gives a
repaired graph mapping to an Andr\'{a}sfai or Vega graph.
Every such template is four-colourable and hence $q$-colourable,
which completes the $Cn/p$ bound for every $\gamma>0$.

For every graph $G$, a uniformly random $q$-colouring gives
$\del_q(G)\le e(G)/q$ by taking expectations.  If
$p>n^{-7/4}$, then $pn^2>n^{1/4}\to\infty$, and Chernoff's inequality gives
$e(\Gnp)\le(1/2+o(1))pn^2$.  Thus, for all sufficiently large $n$,
\[
 \del_q(H)\le\frac1q e(\Gnp)
 \le\left(\frac1{2q}+o(1)\right)pn^2
 \le\left(\frac1{2q}+\gamma\right)pn^2.
\]
If $p\le n^{-7/4}$, the expected number
of two-edge paths in $\Gnp$ is $O(n^3p^2)=O(n^{-1/2})=o(1)$.
Markov's inequality
therefore shows that a.a.s.\ there is no such path;
equivalently, the host has maximum degree at most one and is already
bipartite, so $\del_q(H)=0$ for every $H\subseteq\Gnp$ and $q\ge2$.
Intersecting this a.a.s.\ event (or the edge-count event in the other
range) with the event giving the $Cn/p$ bound, both upper bounds hold
simultaneously for every admissible $H$.  Taking their minimum proves
\eqref{eq:profile-upper}; the two bounds need not be witnessed by the
same colouring.  Since the split is made pointwise in $n$,
the two estimates apply on the corresponding subsequences even if
$p(n)$ crosses $n^{-7/4}$ infinitely often, and their failure
probabilities tend to zero along the full sequence.

The preceding argument gives $\alpha_q^{\rm edit}\le\alpha_q$.  The
reverse inequality follows from \cref{thm:sharpness}(i), proved in
\cref{sec:optimality}.  Indeed, for any proposed threshold
$\alpha<\alpha_q$, we can choose $\eta,\eps>0$ with
$\alpha+\eps<\alpha_q-\eta$.  With $p=1$, \cref{thm:sharpness}(i)
then gives, for all sufficiently large $n$, a
triangle-free graph $H$ with $\delta(H)\ge(\alpha_q-\eta)n$
and $\del_q(H)\ge c_{q,\eta}n^2$.
It satisfies the minimum-degree hypothesis at $\alpha+\eps$, but
$\del_q(H)>Cn$ for every fixed $C$ once $n$ is sufficiently large.
Thus $\alpha$ does not satisfy the defining property of
$\alpha_q^{\rm edit}$.
\end{proof}

For completeness, the two lower-colour cases have the following
explicit forms.  If $\delta(H)\ge(10/29+\gamma)pn$, then
\[
 \del_3(H)\le\min\{C n/p,(1/6+\gamma)pn^2\},
\]
which is an edge-deletion analogue of Jin's theorem \cite{Jin}.
Similarly, if $\delta(H)\ge(2/5+\gamma)pn$, then
\[
 \del_2(H)\le\min\{C n/p,(1/4+\gamma)pn^2\},
\]
which recovers the theorem of Allen et al.\ \cite[Theorem~3]{ABKR}
at the Andr\'{a}sfai--Erd\H{o}s--S\'{o}s threshold \cite{AES}.

\section{Lower bounds}\label{sec:optimality}

The first lower bound uses the standard random blow-up transfer for a fixed
triangle-free template.  This construction is closely related to the sparse
random-host construction of Allen--B\"ottcher--Kohayakawa--Roberts
\cite[Section~3 and Theorem~5]{ABKR}; we record the weighted form needed
here.  The second lower bound uses the low-density observation that, when
$np^2\to0$, only a negligible number of host edges lie in triangles
\cite[Introduction]{ABKR}.  We retain the proof of this strengthened
minimum-degree and $q$-partite-distance conclusion, which is not stated in
that reference.

\subsection{Random blow-ups}

\begin{lemma}
\label{lem:random-blowup}
For a fixed integer $q\ge2$, a fixed triangle-free graph $F$ with
$\chi(F)>q$, and a probability vector $w:V(F)\to(0,1)$,
we write $a=\min_{x\in V(F)}w(N_F(x))$.
There is a constant $c=c(F,w,q)>0$ such that, if
$pn/\log n\to\infty$, then a.a.s.\ $\Gnp$
contains a spanning triangle-free $H$ such that
\[
 \delta(H)=(a+o(1))pn,
 \qquad \del_q(H)\ge cpn^2.
\]
\end{lemma}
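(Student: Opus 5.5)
Our approach is a random partition of $[n]$ according to $w$. We assign each vertex $v\in[n]$ independently to the class $U_x$, $x\in V(F)$, with probability $w(x)$. Alternatively, and more conveniently, we fix a deterministic partition with $|U_x|=\lfloor w(x)n\rfloor$, adjusting one class by at most $|V(F)|$. We then let $H$ consist of all host edges $uv$ with $u\in U_x$, $v\in U_y$ and $xy\in E(F)$. Thus $H$ is a subgraph of a blow-up of $F$, hence triangle-free, and it is spanning by construction. Since the partition is chosen before exposing $\Gnp$, the degrees $d_H(v)=\sum_{y\in N_F(x)}d_\Gamma(v,U_y)$ for $v\in U_x$ are binomial with mean $p\,w(N_F(x))n+O(p)$. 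Because $pn/\log n\to\infty$, Chernoff's inequality and a union bound over the $n$ vertices give $d_H(v)=(w(N_F(x))+o(1))pn$ for all $v$ a.a.s. Hence $\delta(H)=(a+o(1))pn$, with the minimum attained by classes $x$ achieving $a$, which are nonempty since $w>0$.

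For the lower bound on $\del_q(H)$, we take an arbitrary colouring $\phi:[n]\to[q]$. For each $x\in V(F)$, we choose a colour $c(x)$ occupying at least $|U_x|/q$ vertices of $U_x$, and write $U_x'=\phi^{-1}(c(x))\cap U_x$. Since $\chi(F)>q$, the map $c$ is not a proper colouring of $F$, so some edge $xy\in E(F)$ has $c(x)=c(y)$. All $H$-edges between $U_x'$ and $U_y'$ are then monochromatic. Both sets have size at least $w_{\min}n/(2q)$, where $w_{\min}=\min_x w(x)>0$. It therefore suffices to know that a.a.s.\ every pair of disjoint vertex sets $X,Y$ with $|X|,|Y|\ge\beta n$, where $\beta=w_{\min}/(2q)$, satisfies $e_\Gamma(X,Y)\ge\tfrac12p|X||Y|$. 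Given this, $\del_q(H)\ge\tfrac12p\beta^2n^2$, so we may take $c=\beta^2/2$.

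This density property is the only genuinely probabilistic step and is where care is needed. For fixed $X,Y$, Chernoff's inequality gives failure probability $\exp(-\Omega(p\beta^2n^2))$. The number of pairs is at most $4^n$, and $pn^2\gg n$ because $pn\to\infty$, so the union bound succeeds. (Alternatively, \cite[Lemma~14]{ABKR}-type discrepancy bounds give the same conclusion.)

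The main obstacle is thus only bookkeeping: the partition must be fixed independently of the host so that the degree concentration is legitimate, and the rounding of the class sizes must contribute only an $o(pn)$ error in the degrees. Both are routine under $pn/\log n\to\infty$.
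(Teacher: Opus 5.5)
Your proposal is correct and follows essentially the same argument as the paper: a blow-up of $F$ along a partition with $|U_x|=w(x)n+O(1)$, fixed before the host is exposed. Chernoff's inequality with a union bound controls the degrees, a most frequent colour in each class together with $\chi(F)>q$ yields a monochromatic edge $xy$ of $F$, and a union-bound density estimate for linear-size pairs gives the same constant $c=w_{\min}^2/(8q^2)$. The only difference is cosmetic: you take the union bound over all disjoint pairs of sets of size at least $\beta n$, whereas the paper takes it only over subsets of adjacent classes, and this costs nothing since $pn\to\infty$.
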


The construction and the majority-colour step below are the standard
random-blow-up argument used in \cite[Section~3 and Theorem~5]{ABKR}.
We give the short proof for completeness, while recording the weighted
form needed here.

\begin{proof}
Partition $[n]$ into classes $V_x$ with
$|V_x|=w(x)n+O(1)$, and let $H$ consist of the host edges between $V_x$
and $V_y$ whenever $xy\in E(F)$.  Since $F$ is triangle-free, so is $H$.
For $z\in V_x$, Chernoff's inequality and $pn/\log n\to\infty$ give,
uniformly over all vertices,
\[
 d_H(z)=\bigl(w(N_F(x))+o(1)\bigr)pn.
\]
Thus $\delta(H)=(a+o(1))pn$.

Put $w_*:=\min_x w(x)$.  For every $xy\in E(F)$ and every
$X\subseteq V_x$, $Y\subseteq V_y$ with
$|X|,|Y|\ge w_*n/(2q)$, Chernoff's inequality gives
\[
 \Pr\left(e_{\Gamma}(X,Y)<\frac12p|X||Y|\right)
 \le \exp\left(-\frac{w_*^2pn^2}{32q^2}\right).
\]
There are at most $4^n|E(F)|$ choices.  A union bound therefore shows
that, a.a.s., $e_\Gamma(X,Y)\ge\frac12p|X||Y|$ for all such pairs $X,Y$.
For any map $\phi:V(H)\to[q]$, assign to each $V_x$ a most frequent
colour under $\phi$.  Since $\chi(F)>q$, some
edge $xy$ receives the same colour at both ends.  The corresponding two
colour classes have size at least $w_*n/(2q)$, and hence contain at least
$w_*^2pn^2/(8q^2)$ monochromatic edges.  This proves the lemma, for example
with $c=w_*^2/(8q^2)$.
\end{proof}

Choose a fixed template $F_q$ as follows.  For $q=2$ take $C_5$;
for $q=3$ take H\"aggkvist's $29$-vertex triangle-free $4$-chromatic
$10$-regular graph; and for $q\ge4$ take a Hajnal triangle-free graph with
chromatic number greater than $q$ and minimum-degree ratio greater than
$1/3-\eta/2$.  The first two choices and their weighted degrees are
classical \cite{Haggkvist,Jin,LPRRT}, while the last follows from
Hajnal's construction \cite[Section~3]{ErdosSimonovits}.  Applying
\cref{lem:random-blowup} to the uniform weight vector gives
$\delta(H)\ge(\alpha_q-\eta)pn$ and
$\del_q(H)\ge c_{q,\eta}pn^2$ for all sufficiently large $n$.
This proves \cref{thm:sharpness}(i).

In particular, when $0<\gamma<1/87$ and $p\gg n^{-1/2}$, the H\"aggkvist
blow-up gives $\delta(H)\ge(1/3+\gamma)pn$ but
$\del_3(H)=\Omega(pn^2)=\omega(n/p)$.  Thus four colours in the question of
Allen et al.\ genuinely cannot be replaced by three.

\subsection{Sharpness of the coefficient}

\begin{lemma}
\label{lem:host-distance}
For a fixed integer $q\ge2$, if $pn\to\infty$, then a.a.s.\
\[
 \del_q(\Gnp)=\left(\frac1{2q}+o(1)\right)pn^2.
\]
\end{lemma}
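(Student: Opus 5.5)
The upper bound is the random-colouring argument already used in the proof of \cref{cor:profile}. A uniformly random map $V\to[q]$ leaves each edge monochromatic with probability $1/q$, so $\del_q(\Gnp)\le e(\Gnp)/q$. Since $pn\to\infty$, we have $p\binom n2\to\infty$, and Chernoff's inequality gives $e(\Gnp)=(1/2+o(1))pn^2$ a.a.s. This yields $\del_q(\Gnp)\le(1/(2q)+o(1))pn^2$.

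The lower bound is the substance of the lemma. For every partition $V_1\mathbin{\dot\cup}\cdots\mathbin{\dot\cup}V_q$ of $[n]$, the number of monochromatic edges is $\sum_r e(\Gnp[V_r])$. Its mean is
\[
 p\sum_r\binom{|V_r|}2\ge p\bigl(n^2/(2q)-n\bigr),
\]
by convexity. The plan is to show that, a.a.s., simultaneously for all $q^n$ partitions, this sum is at least $(1-\eps)$ times its mean, for every fixed $\eps>0$. For a fixed partition, the sum is a binomial random variable with mean $\mu\ge(1/(2q)-o(1))pn^2$. The lower-tail Chernoff bound gives failure probability at most $\exp(-\eps^2\mu/2)=\exp(-\Omega_{\eps,q}(pn^2))$. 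A union bound over $q^n=\exp(n\log q)$ partitions succeeds precisely because $pn\to\infty$, so that $pn^2\gg n$. Letting $\eps\to0$ slowly then gives $\del_q(\Gnp)\ge(1/(2q)-o(1))pn^2$.

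The main obstacle is this union bound, where the hypothesis $pn\to\infty$ is exactly what is needed. The only subtlety is that very unbalanced partitions have a larger mean, which only helps, so using the uniform lower bound $\mu\ge p(n^2/(2q)-n)$ for every partition suffices. Finally, since the bound must hold along an arbitrary sequence $p(n)$, fix $\eps$ first, obtain failure probability $\exp(-\Omega(\eps^2pn^2)+n\log q)=o(1)$, and then take $\eps=\eps(n)\to0$ slowly enough that $\eps^2pn/\log q\to\infty$.
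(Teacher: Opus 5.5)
Your proposal is correct and follows essentially the same route as the paper. The paper also obtains the upper bound from $\del_q(G)\le e(G)/q$ and concentration of $e(\Gnp)$, and the lower bound from a lower-tail Chernoff bound for the binomial count of monochromatic host edges of each fixed colouring, followed by a union bound over the $q^n$ colourings. The one difference is that the paper makes your slowly vanishing $\eps$ explicit as $\eps_n=(pn)^{-1/4}$, so that $\eps_n^2pn^2=n\sqrt{pn}\gg n\log q$.
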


The first-order estimate is the standard MAX $q$-CUT result of
Coja-Oghlan, Moore, and Sanwalani \cite{MaxQCut}.  We nevertheless give a
short proof for completeness; it is also the direct form needed here.

\begin{proof}
The upper bound follows from $\del_q(G)\le e(G)/q$ and
$e(\Gnp)=(1/2+o(1))pn^2$ a.a.s.  For a fixed $q$-colouring $\phi$ with
class sizes $n_1,\ldots,n_q$, put
$M_\phi=\sum_i\binom{n_i}{2}\ge n^2/(2q)-n/2$.  The number of
monochromatic host edges is $\operatorname{Bin}(M_\phi,p)$.  With
$\varepsilon_n=(pn)^{-1/4}$, Chernoff's inequality gives
\[
 \Pr\left(X_\phi<(1-\varepsilon_n)pM_\phi\right)
 \le \exp\bigl(-\Omega(\varepsilon_n^2pn^2)\bigr)
 =\exp\bigl(-\Omega(n\sqrt{pn})\bigr).
\]
This is $o(q^{-n})$, so a union bound over all $q^n$ colourings shows that
every colouring has $(1/(2q)-o(1))pn^2$ monochromatic edges.  Together
with the upper bound this proves the lemma.  For the dense endpoint
$p>0.99$, including $p=1$, the same estimate is elementary.
\end{proof}

The next lemma makes the low-density remark of \cite[Introduction]{ABKR}
quantitative for the minimum degree and the exact first-order $q$-partite
edit distance.  The underlying deletion idea is already present there; we
retain the following short proof for completeness because this precise
simultaneous statement is not stated there.

\begin{lemma}
\label{lem:triangle-stripping}
For a fixed integer $q\ge2$, if $pn/\log n\to\infty$ and $np^2\to0$,
then a.a.s.\
$\Gamma=\Gnp$ contains a spanning triangle-free $H$ with
\[
 \delta(H)=(1-o(1))pn,
 \qquad
 \del_q(H)=\left(\frac1{2q}+o(1)\right)pn^2.
\]
\end{lemma}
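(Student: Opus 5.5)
We take $H$ to be $\Gamma$ with every edge that lies in a triangle of $\Gamma$ removed. This $H$ is triangle-free, since any triangle of $H$ would be a triangle of $\Gamma$. By construction $\del_q(H)\le\del_q(\Gamma)$, and $\del_q(H)\ge\del_q(\Gamma)-|E(\Gamma)\setminus E(H)|$, because deleting the removed edges together with an optimal deletion set for $H$ makes $\Gamma$ $q$-partite. By \cref{lem:host-distance}, which applies since $pn\to\infty$, it therefore suffices to prove two facts a.a.s.: (a) the number of edges of $\Gamma$ lying in triangles is $o(pn^2)$; and (b) every vertex loses only $o(pn)$ of its edges when passing to $H$. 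The degree claim then follows from the standard Chernoff bound $d_\Gamma(v)=(1+o(1))pn$ uniformly in $v$, which holds because $pn/\log n\to\infty$.

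For (a), the expected number of triangles is at most $n^3p^3/6$, which equals $pn^2\cdot np^2/6=o(pn^2)$ because $np^2\to0$. By Markov's inequality, a.a.s.\ there are at most $\omega^{-1}pn^2$ triangles for a suitable slowly growing $\omega$. Each triangle removes at most three edges, so the count of removed edges is $o(pn^2)$.

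For (b), which is the main obstacle, we need a bound that holds uniformly over all vertices. The number of removed edges at $v$ is at most the number of edges inside $N_\Gamma(v)$, since each removed edge $vu$ lies in a triangle $vuw$ with $uw$ an edge inside $N(v)$. We condition on the neighbourhood of $v$, so that $|N(v)|\le2pn$. The edges inside $N(v)$ are then independent of this conditioning, and the count $e(N(v))$ is stochastically dominated by $\mathrm{Bin}(2p^2n^2,p)$. Its mean is $2p^3n^2=pn\cdot 2np^2=o(pn)$.

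We need $\Pr(e(N(v))\ge\delta pn)=o(1/n)$ for each fixed $\delta>0$. We use the bound $\Pr(\mathrm{Bin}\ge k)\le(e\mu/k)^k$ with $k=\delta pn$. Here $e\mu/k=O(np^2/\delta)\to0$, so the probability is at most $\exp(-\delta pn)$ once $n$ is large. This is $o(1/n)$ because $pn/\log n\to\infty$. A union bound over the $n$ vertices then gives that a.a.s.\ every vertex loses at most $\delta pn$ edges. Letting $\delta\to0$ slowly along a diagonal sequence yields $o(pn)$ uniformly. Combining with the uniform degree bound gives $\delta(H)=(1-o(1))pn$. Together with (a) and \cref{lem:host-distance}, this gives $\del_q(H)=(1/(2q)+o(1))pn^2$.
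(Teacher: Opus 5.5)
Your proposal is correct and is essentially the paper's proof: the same $H$ (delete every edge lying in a triangle), the conditional binomial tail for $e(\Gamma[N_\Gamma(v)])$ given $N_\Gamma(v)$ with $|N_\Gamma(v)|\le 2pn$ followed by a union bound over vertices, and the sandwich $\del_q(\Gamma)-|E(\Gamma)\setminus E(H)|\le\del_q(H)\le\del_q(\Gamma)$ combined with \cref{lem:host-distance}. The paper differs only in using the explicit threshold $t_n=\lceil\sqrt{pn(p^3n^2+\log n)}\rceil$ instead of your fixed $\delta$ with diagonalisation, and in bounding the number of removed edges by summing the per-vertex bound instead of your separate first-moment count of triangles; one harmless slip is that $v$ can lose up to $2e(\Gamma[N_\Gamma(v)])$ edges rather than $e(\Gamma[N_\Gamma(v)])$ (e.g.\ when $\Gamma[N_\Gamma(v)]$ is a matching), which is why the paper writes $2T_v$, but this constant factor does not affect the $o(pn)$ conclusion.
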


\begin{proof}
The graph $H$ obtained from $\Gamma$ by deleting every edge lying in a
triangle is triangle-free.  If $T_v$ denotes the number of triangles
containing a vertex $v$, then at most $2T_v$ edges incident with $v$ are
deleted.  It therefore suffices to show that $\max_vT_v=o(pn)$, which
controls both the degree loss and the total number of deleted edges.

The event $N_\Gamma(v)=S$ only specifies edges incident with $v$.
Conditional on this event, the edges inside $S$ therefore remain
independent Bernoulli variables with parameter $p$, so
\[
 T_v\mid\{N_\Gamma(v)=S\}
 \sim\operatorname{Bin}\left(\binom{|S|}{2},p\right).
\]
Since $p^3n^2=o(pn)$ and $\log n=o(pn)$, we may take
\[
 t_n=\left\lceil\sqrt{pn\bigl(p^3n^2+\log n\bigr)}\right\rceil,
\]
which satisfies $p^3n^2+\log n\ll t_n\ll pn$.
For every fixed $S$ with $|S|\le2pn$, the conditional mean is at most
$p\binom{|S|}{2}\le2p^3n^2$.  Uniformly over all such $S$, the
binomial upper-tail estimate gives
\[
 \Pr(T_v\ge t_n\mid N_\Gamma(v)=S)
 \le\left(\frac{2\e p^3n^2}{t_n}\right)^{t_n}
 =\exp\{-\omega(\log n)\}.
\]
Chernoff's inequality gives
$\Delta(\Gamma)\le2pn$ and $\delta(\Gamma)=(1-o(1))pn$
a.a.s.  Averaging the conditional tail bound over the possible sets
$S$ with $|S|\le2pn$ gives
$\Pr(T_v\ge t_n,\ d_\Gamma(v)\le2pn)\le\exp\{-\omega(\log n)\}$.
Consequently,
\[
 \Pr\bigl(\max_v T_v\ge t_n\bigr)
 \le \Pr\bigl(\Delta(\Gamma)>2pn\bigr)
   +\sum_v\Pr\bigl(T_v\ge t_n,\ d_\Gamma(v)\le2pn\bigr)
 =o(1).
\]
Thus $\max_vT_v<t_n=o(pn)$ a.a.s.
Consequently $\delta(\Gamma)-2t_n\le\delta(H)\le\delta(\Gamma)$,
so $\delta(H)=(1-o(1))pn$.

The deleted edge set $R=E(\Gamma)\setminus E(H)$ therefore satisfies
\[
 |R|=\frac12\sum_v\bigl(d_\Gamma(v)-d_H(v)\bigr)
 \le\sum_vT_v\le nt_n=o(pn^2).
\]
Since each colouring loses at most $|R|$ monochromatic edges,
\cref{lem:host-distance} gives
\[
 \del_q(H)\ge\del_q(\Gamma)-|R|
 =\left(\frac1{2q}-o(1)\right)pn^2.
\]
Conversely, $H\subseteq\Gamma$ implies
$\del_q(H)\le\del_q(\Gamma)=(1/(2q)+o(1))pn^2$.  The two bounds prove the
claimed estimate.
\end{proof}

For $0<\gamma<1-\alpha_q$, the fixed gap $1-\alpha_q-\gamma>0$
and \cref{lem:triangle-stripping} give
$\delta(H)\ge(\alpha_q+\gamma)pn$ for all sufficiently large $n$.
The edit-distance estimate in that lemma proves
\cref{thm:sharpness}(ii) and completes the proof.

\section*{Acknowledgment}
G. Gao was supported by the National Key R\&D Program of China
(grant 2023YFA1010202), the National Natural Science Foundation of
China (grant 12401448), and the Natural Science Foundation of Fujian
Province (grant 2024J08030).  J. He was supported in part by the
Science and Technology Commission of Shanghai Municipality
(grant 22DZ2229014).

\section*{Data availability}
No datasets were generated or analysed in this study.

\section*{Declaration on the use of generative AI}

The authors used generative AI tools to assist in discussing proof strategies, checking proofs, and
improving the exposition. The authors take full responsibility for the mathematical arguments, results,
and conclusions, all of which were carefully reviewed and verified by them.

\end{document}